\newtheorem{assumption}{Assumption}
\newtheorem{theorem}{Theorem}
\newtheorem{example}{Example}
\newtheorem{lemma}{Lemma}
\newtheorem{proposition}{Proposition}
\renewcommand{\mathbb}{\mathbbm}
\newcommand{\mathscr}{\mathcal}
\newcommand{\Rb}{\mathbb{R}}
\newcommand{\Eb}{\mathbb{E}}
\newcommand{\Fc}{\mathcal{F}}
\newcommand{\Nc}{\mathcal{N}}
\renewcommand{\epsilon}{\varepsilon}
\def\varkappa{\kappa}
\DeclareMathOperator*{\argmin}{arg\,min}
\newcommand{\beq}{\begin{equation}}
\newcommand{\eeq}{\end{equation}}
\newcommand{\beqa}{\begin{eqnarray}}
\newcommand{\eeqa}{\end{eqnarray}}
\newcommand{\beqas}{\begin{eqnarray*}}
\newcommand{\eeqas}{\end{eqnarray*}}
\newcommand{\bi}{\begin{tightitemize}}
\newcommand{\ei}{\end{tightitemize}}
\newcommand{\ba}{\begin{array}}
\newcommand{\ea}{\end{array}}
\newcommand{\nn}{\nonumber}
\def\vgap{\vspace*{.1in}}
\newenvironment{tightitemize}{%
    \list{{\textup{$\bullet$}}}{\settowidth\labelwidth{{\textup{\qquad}}}
    \leftmargin\labelwidth \advance\leftmargin\labelsep
    \parsep 0pt plus 1pt minus 1pt \topsep 3pt \itemsep 3pt
    }}{\endlist}
\title{A Single Time-Scale Stochastic Approximation Method for Nested Stochastic Optimization}
\author{Saeed Ghadimi\thanks{Department of Operations Research and Financial Engineering, Princeton University, Princeton, NJ 08544; email: sghadimi@princeton.edu}}
\author {Andrzej Ruszczy\'nski\thanks{Department of Management Science and Information Systems, Rutgers University, Piscataway, 08854; email: rusz@rutgers.edu}}
\author {Mengdi Wang\thanks{Department of Operations Research and Financial Engineering, Princeton University, Princeton, NJ 08544; email:mengdiw@princeton.edu}}
\affil{}
\begin{document}
\maketitle

\begin{abstract}
{
We study constrained nested stochastic optimization problems in which the objective function is a composition of two smooth functions whose exact values and
derivatives are not available. We propose a single time-scale stochastic approximation algorithm, which we call the Nested Averaged Stochastic Approximation (NASA), to find an approximate stationary point of the problem. The algorithm has two auxiliary averaged sequences (filters) which estimate the gradient of the composite objective function and the inner function value. By using a special Lyapunov function, we show that NASA achieves the sample complexity of ${\cal O}(1/\epsilon^{2})$ for finding an $\epsilon$-approximate stationary point, thus outperforming all extant methods for nested stochastic approximation. Our method and its analysis are the same for both unconstrained and constrained problems, without any need of batch samples for constrained nonconvex stochastic optimization. We also present a simplified parameter-free variant of the NASA method for solving constrained single level stochastic optimization problems, and we prove the same complexity result for both unconstrained and constrained problems.
}
\end{abstract}

\section{Introduction}

The main objective of this work is to propose a new recursive stochastic algorithm for constrained smooth composition optimization problems of the following form:
\begin{equation}
\label{main_prob}
\min_{x\in X} \big\{ F(x) = f(g(x)) \big\}.
\end{equation}
Here, the functions $f:\Rb^m\to\Rb$ and $g:\Rb^n\to\Rb^m$ are continuously differentiable,  and the set $X \subseteq \Rb^n$ is convex and closed. We do not assume $f,g$, or $F$ to be convex.

We focus on the simulation setting where neither the values nor the derivatives of $f$ or $g$ can be observed, but at any argument values $x\in \Rb^n$ and $u\in \Rb^m$ we can obtain random estimates of $ g(x)$,
of the Jacobian  $\nabla g(x)$, and of the gradient $ \nabla f(u)$. Such situations occur in \emph{stochastic composition optimization}, where we need to solve the problem:
\begin{equation}
\label{stoch_prob}
\min_{x\in X}\;  \mathbb{E}\big[\varphi\big( \mathbb{E}[\psi(x;\zeta)]; \xi\big)\big],
\end{equation}
in which $\zeta$ and $\xi$ are random vectors, and $\mathbb{E}$ denotes the expected value. In such situations, one can obtain samples
$(\tilde{\xi},\tilde{\zeta})$ of $(\xi,\zeta)$, and treat $\psi(x,\tilde{\zeta})$, $\nabla_x\psi(x,\tilde{\zeta})$, and $\nabla_u \varphi(u,\xi)$
as random estimates of $\mathbb{E}[\psi(x;\zeta)]$, $\nabla \mathbb{E}[\psi(x;\zeta)]$, and $\nabla\mathbb{E}\big[ \varphi(u,\xi)\big]$, respectively.
In this paper, we propose stochastic gradient-type methods for finding approximate stationary points of problems of this type. We also
derive sample complexity guarantees for these methods.

Stochastic composition problems of form \eqref{main_prob}--\eqref{stoch_prob} occur in many applications; we present three modern motivating examples.

\begin{example}[\emph{Stochastic Variational Inequality}]
\label{e:SVI}
{\rm
%The \emph{stochastic variational inequality} problem is formulated as follows.
We have a random mapping $H:\Rb^n\times \Omega\to \Rb^n$
on some probability space $(\varOmega,\Fc,P)$ and a closed convex set $X$.  The problem is to find ${x}\in X$ such that
\begin{equation}
\label{SVI}
\big\langle \mathbb{E}[H(x)], \xi - {x} \big\rangle  \le 0, \quad \text{for all}\quad \xi \in X.
\end{equation}
The reader is referred to the recent publications \cite{iusem2017extragradient} and \cite{koshal2013regularized}
for a discussion of the challenges associated with this problem and its applications {\color{black} (our use of the ``$\le$'' relation instead of the common ``$\ge$''
is only motivated by the easiness to show the conversion to our formulation)}.
We propose to convert problem \eqref{SVI} to the nested form \eqref{main_prob} by defining the {\color{black}\emph{lifted gap function}} $f: \Rb^n \times \Rb^n\to \Rb$ as
\beq
f(x,h) = \max_{\xi \in X}\, \left\{ \langle h, \xi - x \rangle - \frac{1}{2} \| \xi - x\|^2 \right\},\label{gad_function}
\eeq
and the function $g:\Rb^n \to \Rb^n \times \Rb^n$ as $g(x) = \big(x,\mathbb{E}[H(x)]\big)$.
In this case, we actually have access to the gradient of $f$, but the value and the Jacobian of $g$ must be estimated. We do not require
${E}[H(x)]$ to be monotone.
\hfill $\Box$
}
\end{example}

\begin{example}[\emph{Policy Evaluation for Markov Decision Processes}]
\label{e:e2}
{\rm For a Markov chain $\{X_0,X_1,\ldots\}\subset \mathcal{X} $  with an unknown transition operator $P$, a reward function
$r:\mathcal{X} \mapsto \mathbb{R}$, and a discount factor  $\gamma\in(0,1)$, we want to estimate the value function $V:\mathcal{X} \mapsto \mathbb{R}$ given by $V(x) = \mathbb{E}\left[  \sum^{\infty}_{t=0} \gamma^t r(X_t) \mid X_0=x \right]$. For a finite space $\mathcal{X}$,
 {\color{black} the functions $r$ and $V$ may be viewed as vectors, and the following policy evaluation equation is satisfied:
\[
V = r + \gamma P V.
\]
As $P$ is not known and $|\mathcal{X}|$ may be large, this system cannot be solved directly.
%where the expected value is taken with respect to $(X_t,X_{t+1})$ over some distribution.
To reduce the dimension of this problem, we employ a sketching matrix $S\in \Rb^{d\times |\mathcal{X}|}$ and a linear model for the value function $V(x) \approx \sum^k_{i=1} w_i \phi_i(x)$, where $\phi_1(\cdot),\ldots, \phi_k(\cdot)$ are given basis functions. The we can formulate the residual minimization problem for the policy evaluation
equation:
$$\min_{w\in\mathbb{R}^d}  \big\| S \left(\varPhi w - r - \gamma\, \mathbb{E}[{\hat P} ]\varPhi w\right) \big\|^2,
$$
where $\varPhi$ is the matrix with columns being the basis functions, and $\hat P$ is a sample transition matrix (see
\cite{WaLiFa17} and the references therein). In this case,
we may define the outer function $f$ as the squared norm,  and the inner function $g$ as the linear mapping inside the norm. Neither of the functions has an easily available value or derivative, but their samples can be generated by simulation.}\hfill $\Box$
}
\end{example}

\begin{example}[\emph{Low-Rank Matrix Estimation}]
\label{e:e3}
{\rm Let $\bar X\in \mathbb{R}^{n\times n} $ be an unknown matrix that we aim to approximate. One can sample from the unknown matrix and each sample returns a random matrix $X$ such that $\mathbb{E}[X] =\bar X$. Let $k<n$ be a pre-specified rank. The low-rank matrix estimation problem has the following form:
\[
\min_{(U,V)\in S} \ell\left( \mathbb{E}[X] - UV^T \right).
\]
In this problem, the unknowns $U$ and $V$ are $n\times k$ matrices, $S \subset \mathbb{R}^{n\times k} \times \mathbb{R}^{n\times k}$ is a bounded set, and $\ell:\mathbb{R}^{n\times n} \to \mathbb{R}$ is a loss function (\emph{e.g.}, the Frobenius norm).
Low-rank matrix approximation finds wide applications including image analysis, topic models, recommendation systems, and Markov models
(see \cite{ge2015escaping} and the references therein). Our formulation is nonconvex. When data arrive sequentially, our method can be applied in an on-line fashion to find a stationary solution.
\hfill $\Box$
}
\end{example}

Interest in stochastic approximation algorithms for problems of form \eqref{main_prob} dates back to \cite[Ch. V.4]{ermoliev1976methods}, where
penalty functions for stochastic constraints and composite regression models were considered.
There, and in the literature that followed, the main approach was to use two- or multiple-level stochastic
recursive algorithms in different time scales. For problems of form \eqref{main_prob} this amounts to using two stepsize sequences: one for updating the
main decision variable $x$, and another one for filtering the value of the inner function $g$. The crucial requirement is that
the outer method must be infinitely slower than the inner method, which decreases the convergence rate and creates practical difficulties.
Sample complexity analysis and acceleration techniques of stochastic approximation methods with multiple time scales  for solving problems of form \eqref{main_prob} gained interests in recent years.
We refer the readers to \cite{wang2017stochastic,WaLiFa17,yang2018multi} for a detailed account of these techniques and existing results for the general nested composition optimization problem. Furthermore, a Central Limit Theorem for the stochastic composition problem \eqref{main_prob}-\eqref{stoch_prob} has been established in  \cite{dentcheva2017statistical}. It shows that the $n$-sample empirical optimal value of problem \eqref{stoch_prob} converges to the true optimal value at a rate of ${\cal O}(1/\sqrt{n})$. {\color{black} The work \cite{ermoliev2013sample} establishes large deviation bounds for the empirical optimal value.}

In addition to the general solution methods studied in \cite{dentcheva2017statistical,wang2017stochastic,WaLiFa17,yang2018multi}, several notable special cases of the composition problem have been considered in the machine learning literature.
In the case where $f$ is convex and $g$ is linear, one can solve \eqref{main_prob} using duality-based methods via a Fenchel dual reformulation \cite{dai2017learning}. In the case where it is allowed to take mini-batches, Ref. \cite{blanchet2017unbiased} proposed a sampling scheme to obtain unbiased sample gradients of $F$ by
forming randomly sized mini-batches.
In the case when $f$ and $g$ take the form of a finite sum and strong convexity holds, one can leverage the special structure to obtain linearly convergent algorithms; see e.g. \cite{lian2017finite,liu2018dualityfree}.
To the authors' best knowledge, no method exists for solving \eqref{main_prob} with general smooth $f$ and $g$, which uses a single time-scale stochastic approximation update and does not resort to mini-batches. There is also no method for approximating stationary solutions that has provable complexity bounds, when the composition problem is constrained.

Our contributions are the following. First, we propose a new Nested Averaged Stochastic Approximation (NASA) algorithm for solving \eqref{main_prob}, which is qualitatively different from the earlier approaches. Its main idea, inspired by \cite{ruszczynski1983stochastic,ruszczynski87}, is to lift the problem into a higher dimensional space, $\Rb^n \times \Rb^n \times \Rb^m$, where our objective is not only to find the optimal $x$, but also to find the
 gradient of $F$ at the optimal point, and the value of $g$ at this point. In this space, we construct an iterative method using one stepsize sequence, and we prove convergence by employing a specially tailored merit (Lyapunov) function. This leads to the first single-timescale stochastic approximation algorithm for the composition problem, and entails essential improvements over the earlier approaches.

Secondly, we show that with proper choice of the stepsize sequence, ${\cal O}(1/\epsilon^2)$ observations are sufficient for NASA algorithm to find
a pair $(\bar x, \bar z) \in X \times \Rb^n$ satisfying $\Eb[V(\bar x, \bar z)] \le \epsilon$, where $\bar z$ is an estimate for $\nabla F(\bar x)$, and
$V(x,z)$ is an optimality measure generalizing $\|\nabla F(\bar x)\|^2$ to constrained problems; see \eqref{Vx}. This complexity bound is consistent with the Central Limit Theorem for composite risk functionals~\cite{dentcheva2017statistical} and
is better than the best-known complexity of ${\cal O}(1/\epsilon^{2.25})$ obtained in \cite{WaLiFa17} for smooth nested nonconvex stochastic optimization. In fact, our complexity bound for the two-level composition problem is of the same order as the complexity of the stochastic gradient method for general smooth one-level nonconvex stochastic optimization~\cite{GhaLan12}.

Thirdly, our convergence analysis of the NASA method is the same for both unconstrained and constrained problems, allowing us to obtain the same complexity of ${\cal O}(1/\epsilon^2)$ for the constrained case, without taking batches of samples per iterations. To the best of our knowledge, this is the first direct convergence analysis of {\color{black} a method for general stochastic nested problems of form \eqref{main_prob} which avoids} multiple samples per iteration to reduce the variance of the stochastic gradients. Hence, this property makes the NASA method attractive for online learning where the samples are received one by one.

Finally, we present a simplified variant of the NASA method for solving a class of single-level stochastic optimization problem, \emph{i.e.}, with $g(x) \equiv x$ in problem \eqref{main_prob}. This simplified version is a form of a constrained dual averaging method. We show that the sample (iteration) complexity of this algorithm is of the same order as that of the NASA method. Moreover, the stepsize schedule of this method, unlike almost all existing stochastic approximation algorithms, does not depend on any problem parameters or employ line-search procedures. Its rate of convergence is established without forming mini-batches of samples, and is valid for both unconstrained and constrained problems. It should be mentioned that a similar complexity bound has recently been obtained in \cite{DavBen18,davis2019stochastic} for finding an approximate stationary point (albeit with a different optimality measure) for nonsmooth, constrained, and nonconvex one-level stochastic optimization problems without taking mini-batches of samples per iteration.
{\color{black} Some online algorithms for constrained problems that use mini-batches of samples choose their size to improve error complexity, while making a trade-off with sample complexity (see, \emph{e.g.}, \cite{GhaLan16,GhaLanZhang16}). Hence, they may be more desirable in the cases where the projection onto the feasible set is computationally hard.}

{\bf Notation.} The optimal value of problem \eqref{main_prob} is denoted by $F^*$. For any Lipschitz continuous function $h$, we use $L_h$ to denote its Lipschitz constant. We use $\nabla h$ to denote the gradient (or Jacobian) of a scalar (or vector) function $h$.

\section{The Method}
Our goal in this section is to propose a stochastic approximation algorithm for solving problem \eqref{main_prob} where estimates  of the gradient
of $f$ and the value and Jacobian of $g$ are available through calls to a  stochastic oracle.

The method generates three random sequences, namely, approximate solutions $\{x^k\}$,
average gradients $\{z^k\}$, and average $g$-values $\{u^k\}$, defined
on a certain probability space $(\Omega,\Fc,P)$. We let $\Fc_k$ to be
the $\sigma$-algebra generated by
$\{x^0,\dots,x^k,z^0,\dots,z^k, u^0\dots,u^k\}$.
We also make the following assumption on the stochastic oracle.

\begin{assumption}\label{stoch_assump}
For each $k$, the stochastic oracle delivers random vectors $G^{k+1}\in \Rb^m$, $s^{k+1} \in \Rb^n$, and a random matrix $J^{k+1} \in \Rb^{m \times n}$, such that $s^{k+1}$ and $J^{k+1}$ are conditionally independent given $\Fc_k$, and
\begin{align*}
\Eb[G^{k+1}|\Fc_k] &= g(x^{k+1}), \qquad  \Eb[\|G^{k+1}-g(x^{k+1})\|^2 | \Fc_k] \le \sigma^2_G, \\
\Eb[J^{k+1}|\Fc_k] &= \nabla g(x^{k+1}), \qquad \Eb[\|J^{k+1}\|^2 | \Fc_k] \le \sigma^2_J,\\
\Eb[s^{k+1}|\Fc_k] &= \nabla f(u^k), \qquad \Eb[\|s^{k+1}\|^2 | \Fc_k] \le \sigma^2_s.
\end{align*}
\end{assumption}
The shift in indexing of $x^k$ in the above assumption is due to the fact that $x^{k+1}$ will be $\Fc_k$-measurable in our method. Our method  proceeds as presented in Algorithm \ref{alg_NASA}.
%For smooth $f$ we can also have $g(x^k)$ and $g'(x^k)$ on the rhs. }

\begin{algorithm} [h]
	\caption{Nested Averaged Stochastic Approximation (NASA)}
	\label{alg_NASA}
	\begin{algorithmic}

\STATE \emph{Input:}
$x^0 \in X$, $z^0 \in \Rb^n$, $u^0 \in \Rb^m$,  $a>0$, $b>0$. %positive sequences $\{\beta_k\}_{k \ge 0}$  and $\{\tau_k\}_{k \ge 0} \subset (0,1/a]$.

\STATE 0. Set $k=0$.

\STATE 1. {\color{black} For $\Fc_k$-measurable regularization coefficient $\beta_k >0$ and stepsize $\tau_k \in (0,1/a]$ ,} compute
\begin{equation}
\label{QP}
y^k = \argmin_{y \in X}\  \left\{\langle z^k, y-x^k \rangle + \frac{\beta_k}{2} \|y-x^k\|^2\right\},
\end{equation}
and set
\beq \label{def_xk}
x^{k+1} = x^k + \tau_k (y^k-x^k).
\eeq

\STATE 2. Call the stochastic oracle to obtain $s^{k+1}$ at $u^k$, $G^{k+1}$ and $J^{k+1}$ at $x^{k+1}$, and update the running averages as
\begin{align}
z^{k+1} &= (1-a\tau_k)z^k + a\tau_k \big[J^{k+1}\big]^T s^{k+1},\label{def_zk}\\
u^{k+1} &= (1-b\tau_k)u^k + b\tau_k G^{k+1}.\label{def_uk}
\end{align}

\STATE 3. Increase $k$ by one and go to Step 1.

	\end{algorithmic}
\end{algorithm}

\vgap

{A few remarks are in order. First, the stochastic gradient $ \big[J^{k+1}\big]^T s^{k+1}$ returned by the stochastic oracle is a biased estimator of the gradient of $F(x^{k+1})$. Hence, $z^{k+1}$, as a weighted average of these stochastic gradients, is also a biased estimator of $\nabla F(x^{k+1})$. However, the sum of the bias terms of the latter estimator grows slower than the former one, ensuring convergence of the algorithm (see Theorem~\ref{cvrg_rate_nocvx}). Secondly, $u^{k+1}$ is also a biased estimator of $g(x^{k+1})$, whose error can be properly controlled and asymptotically driven to $0$. Finally, convergence of Algorithm~\ref{alg_NASA} depends on the choice of the sequences $\{\tau_k\}$ and $\{\beta_k\}$ and the parameters $a$ and $b$, which will be specified in the next section.
%The stepsize sequences $\{\tau_k\}$ and $\{\beta_k\}$ will be chosen to reduce to zero on the same timescale.
}

We end this section with a brief review of the optimality conditions for problem \eqref{main_prob} and their relation to the subproblem \eqref{QP}.
The following result is standard (see, e.g., \cite[Thm. 3.24]{ruszczynski2006nonlinear}.
\begin{theorem}
\label{opt-cond_theom}
If a point $\hat{x}\in X$ is a local minimum of problem \eqref{main_prob}, then
\begin{equation}
\label{opt-cond}
-\nabla F(\hat{x}) \in \Nc_X(\hat{x}),
\end{equation}
where $\Nc_X(\hat{x})$ denotes the normal cone to $X$ at the point $\hat{x}$. If in addition the function $F(\cdot)$ is convex, then every point $\hat{x}$ satisfying \eqref{opt-cond}
is the global minimum of problem \eqref{main_prob}.
\end{theorem}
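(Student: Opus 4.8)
The plan is to treat this as the standard first-order optimality condition for a smooth function over a convex set, relying only on the convexity of $X$, the differentiability of $F$, and the definition of the normal cone $\Nc_X(\hat{x}) = \{ v \in \Rb^n : \langle v, y - \hat{x} \rangle \le 0 \text{ for all } y \in X \}$. First I would note that since $f$ and $g$ are continuously differentiable, the composite $F = f \circ g$ is continuously differentiable by the chain rule, with $\nabla F(x) = [\nabla g(x)]^T \nabla f(g(x))$, so the gradient appearing in \eqref{opt-cond} is well defined.

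For the necessary condition, I would use a feasible-direction argument. Fix an arbitrary $y \in X$. By convexity of $X$, the point $\hat{x} + t(y - \hat{x})$ lies in $X$ for every $t \in [0,1]$. Since $\hat{x}$ is a local minimum, there is $\bar{t} > 0$ such that $F(\hat{x} + t(y - \hat{x})) \ge F(\hat{x})$ for all $t \in (0, \bar{t}]$. Dividing by $t$ and passing to the limit $t \to 0^+$ yields the directional-derivative inequality $\langle \nabla F(\hat{x}), y - \hat{x} \rangle \ge 0$. As $y \in X$ was arbitrary, this is precisely $\langle -\nabla F(\hat{x}), y - \hat{x} \rangle \le 0$ for all $y \in X$, i.e., $-\nabla F(\hat{x}) \in \Nc_X(\hat{x})$.

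For the sufficiency under convexity, I would invoke the gradient inequality for differentiable convex functions: if $F$ is convex, then $F(y) \ge F(\hat{x}) + \langle \nabla F(\hat{x}), y - \hat{x} \rangle$ for every $y \in X$. Combining this with the optimality condition $\langle \nabla F(\hat{x}), y - \hat{x} \rangle \ge 0$ (which is equivalent to $-\nabla F(\hat{x}) \in \Nc_X(\hat{x})$) gives $F(y) \ge F(\hat{x})$ for all $y \in X$, so $\hat{x}$ is a global minimizer.

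Since both halves reduce to textbook arguments, I do not expect any genuine obstacle. The only points requiring a little care are confirming differentiability of $F$ before writing $\nabla F$, being explicit that the limit $t \to 0^+$ converts the discrete minimality inequality into the directional-derivative inequality, and matching the sign convention in the definition of the normal cone so that the statement $-\nabla F(\hat{x}) \in \Nc_X(\hat{x})$ reads correctly.
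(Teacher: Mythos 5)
Your proof is correct: the paper itself gives no proof of this theorem, citing it as standard (Theorem 3.24 of Ruszczy\'nski's \emph{Nonlinear Optimization}), and your feasible-direction argument for necessity together with the gradient inequality for sufficiency under convexity is exactly that standard textbook argument. No gaps to flag; the care you note about differentiability of $F = f\circ g$ and the sign convention in $\Nc_X(\hat{x})$ is appropriate.
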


Condition \eqref{opt-cond} is closely related to the subproblem \eqref{QP}. Denote (for $\beta>0$)
\[
 \bar{y}(x,z,\beta) = \argmin_{y \in X}\  \left\{\langle z, y-x \rangle + \frac{\beta}{2} \|y-x\|^2\right\}.
\]
Elementary manipulation shows that
\[
\bar{y}(x,z,\beta) = \Pi_X\Big(x - \frac{1}{\beta}z\Big),
\]
where $\Pi_X(\cdot)$ is the operation of the orthogonal projection on the set $X$. The relation $-z\in \Nc_X(x)$ is equivalent to $\bar{y}(x,z,\beta) = x$.
We will, therefore, use the function
\begin{equation}
\label{Vx}
V(x,z) = \|\bar{y}(x,z,1) - x\|^2 + \| z - \nabla F(x)\|^2
\end{equation}
as a measure of violation of the optimality condition \eqref{opt-cond} by the primal-dual pair $(x,z)$.
In the unconstrained case where $X=\mathbb{R}^n$,  we have $V(x,z) =\|z\|^2 + \|z- \nabla F(x)\|^2$.

The following lemma relates the optimality measure $V(x,z)$ to subproblem \eqref{QP} for an arbitrary $\beta>0$.

\begin{lemma}
\label{l:opt-bound}
For every $x\in X$ and every $\beta>0$,
\begin{equation}
\label{opt-bound}
\|\bar{y}(x,z,1) - x\| \le \max(1,\beta) \;\|\bar{y}(x,z,\beta) - x\|.
\end{equation}
\end{lemma}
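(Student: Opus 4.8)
The plan is to use the explicit projection formula $\bar{y}(x,z,\beta)=\Pi_X\big(x-\tfrac1\beta z\big)$ recorded just above the statement, and to deduce \eqref{opt-bound} from two monotonicity properties of the scalar function $d(\beta):=\|\bar{y}(x,z,\beta)-x\|$ on $\beta>0$: namely (i) $\beta\mapsto d(\beta)$ is nonincreasing, and (ii) $\beta\mapsto \beta\,d(\beta)$ is nondecreasing. Granting these, the inequality splits into two cases. If $0<\beta\le 1$, then $\max(1,\beta)=1$ and (i) gives $d(1)\le d(\beta)$. If $\beta\ge 1$, then $\max(1,\beta)=\beta$ and (ii), applied between the arguments $1$ and $\beta$, gives $d(1)=1\cdot d(1)\le \beta\,d(\beta)$. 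In both cases this is exactly \eqref{opt-bound}, so the whole argument reduces to establishing (i) and (ii).

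For (i), I would fix $\beta_1\le\beta_2$ and use that $\bar{y}(x,z,\beta_i)$ is the minimizer over $X$ of $y\mapsto\langle z,y-x\rangle+\tfrac{\beta_i}{2}\|y-x\|^2$. Writing the defining optimality inequality of each minimizer, evaluated at the other minimizer, and adding the two, the linear terms $\langle z,\cdot\rangle$ cancel and one is left with $(\beta_2-\beta_1)\big(d(\beta_1)^2-d(\beta_2)^2\big)\ge 0$. Since $\beta_2\ge\beta_1$, this forces $d(\beta_1)\ge d(\beta_2)$, which is (i).

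For (ii) I would pass to the variational form of \eqref{QP}: $\bar{y}(x,z,\beta)$ is characterized by $\langle z+\beta(\bar{y}-x),\,y-\bar{y}\rangle\ge 0$ for all $y\in X$. It is convenient to introduce the prox-gradient map $G_\beta:=\beta\big(x-\bar{y}(x,z,\beta)\big)$, so that $\|G_\beta\|=\beta\,d(\beta)$ and the optimality condition reads $\langle z-G_\beta,\,y-\bar{y}(x,z,\beta)\rangle\ge 0$. Testing the condition at $\beta_1$ with $y=\bar{y}(x,z,\beta_2)$, and at $\beta_2$ with $y=\bar{y}(x,z,\beta_1)$, and adding, I obtain
\[
\Big\langle G_{\beta_2}-G_{\beta_1},\ \tfrac{1}{\beta_1}G_{\beta_1}-\tfrac{1}{\beta_2}G_{\beta_2}\Big\rangle\ge 0 .
\]
Expanding and bounding the cross term by Cauchy--Schwarz, $\langle G_{\beta_1},G_{\beta_2}\rangle\le\|G_{\beta_1}\|\,\|G_{\beta_2}\|$, reduces this to a scalar inequality in $a:=\|G_{\beta_1}\|$ and $b:=\|G_{\beta_2}\|$, of the form $(b-a)\big(\tfrac{a}{\beta_1}-\tfrac{b}{\beta_2}\big)\ge 0$. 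For $\beta_1\le\beta_2$ the case $a>b$ is impossible: it would force $\tfrac{a}{\beta_1}\le\tfrac{b}{\beta_2}\le\tfrac{b}{\beta_1}$, hence $a\le b$, a contradiction. Therefore $a\le b$; that is, $\|G_\beta\|$ is nondecreasing, which is (ii).

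I expect (ii) to be the main obstacle. For (i), simply adding the two optimality inequalities suffices, but the same maneuver only re-derives (i) and does not yield the scaled statement; one genuinely has to rewrite the optimality condition through the gradient mapping $G_\beta$ and then control $\langle G_{\beta_1},G_{\beta_2}\rangle$ by Cauchy--Schwarz, with a little extra care in the degenerate situations $\|G_{\beta_i}\|=0$. Once (i) and (ii) are available, the two-case argument above closes the proof at once.
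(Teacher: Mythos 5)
Your proof is correct, and it takes a recognizably different route from the paper's, though the two share their key ingredient. The paper proves \eqref{opt-bound} directly for the pair $(\beta,1)$: it adds the two projection characterizations \eqref{projection} at $\tau=\beta$ and $\tau=1$ to obtain \eqref{sum-ineq}, then argues by cases --- for $\beta\ge 1$ it extracts $\|y(1)-y(\beta)\|\le(\beta-1)\|y(\beta)\|$ via Cauchy--Schwarz and finishes with the triangle inequality, while for $\beta\le 1$ it reads $\|y(\beta)\|\ge\|y(1)\|$ off the same inequality. You instead isolate two monotonicity facts valid for all $0<\beta_1\le\beta_2$, namely that $d(\beta)$ is nonincreasing and $\beta\,d(\beta)=\|G_\beta\|$ is nondecreasing, and specialize at the arguments $1$ and $\beta$. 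Your property (ii) rests on exactly the same added variational inequality as the paper's \eqref{sum-ineq} --- in your notation $\langle G_{\beta_2}-G_{\beta_1},\,\beta_1^{-1}G_{\beta_1}-\beta_2^{-1}G_{\beta_2}\rangle\ge 0$ is \eqref{sum-ineq} after the translation $x=0$ and $G_\beta=-\beta y(\beta)$ --- but you close it by Cauchy--Schwarz and a scalar case analysis in $(a,b)$ rather than the paper's vector decomposition and triangle inequality; I checked the scalar reduction $(b-a)\bigl(a/\beta_1-b/\beta_2\bigr)\ge 0$ and the contradiction argument, and they are sound, including the degenerate cases $a=0$ or $b=0$. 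Your property (i) is proved by a genuinely different mechanism: summing the two objective-value optimality inequalities for \eqref{QP} so that the linear terms cancel, yielding $(\beta_2-\beta_1)\bigl(d(\beta_1)^2-d(\beta_2)^2\bigr)\ge 0$, whereas the paper's Case~2 re-uses \eqref{sum-ineq}. What your organization buys is a pair of reusable monotonicity lemmas --- they in fact give the two-sided bound $\min(1,\beta)\,d(\beta)\le d(1)\le\max(1,\beta)\,d(\beta)$, of which the lemma states one half --- at the cost of being somewhat longer than the paper's direct two-case argument for the single comparison actually needed.
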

\begin{proof}
To simplify notation, set $x=0$,  $y(\beta) = \bar{y}\big(x,z,\beta\big) = \Pi_X\Big(-\frac{1}{\beta}z\Big)$.
 By the
characterization of the orthogonal projection,
\begin{equation}
\label{projection}
\big \langle  z + \tau y(\tau), \xi - y(\tau) \big \rangle \ge 0 \quad \text{for all} \quad \xi \in X,\quad \tau>0.
\end{equation}
Setting $\tau=\beta$ and $\xi = y(1)$ we obtain
\[
\big \langle  z + \beta y(\beta), y(1) - y(\beta) \big \rangle \ge 0.
\]
Now we set $\tau=1$ and $\xi = y(\beta)$ in \eqref{projection} and get
\[
\big \langle  z +  y(1),  y(\beta) - y(1) \big \rangle \ge 0.
\]
Adding these inequalities yields
\begin{equation}
\label{sum-ineq}
\langle \beta y(\beta) - y(1), y(1)-y(\beta) \rangle \ge 0.
\end{equation}
Consider two cases.

\noindent
\emph{Case 1:} $\beta \ge 1$.
Inequality \eqref{sum-ineq} implies that
\[
(\beta-1)\|y(\beta)\| = \| \beta y(\beta)  - y(\beta)\| \ge \|y(1) - y(\beta)\|.
\]
By the triangle inequality and the last relation,
\[
\|y(1)\| \le \| y(\beta)\| + \|y(1)- y(\beta)\| \le \beta \|y(\beta)\|.
\]
which proves our claim in this case.

\noindent
\emph{Case 2:} $0 < \beta \le 1$.
From \eqref{sum-ineq} we obtain
\[
(1-\beta) \langle y(\beta) - y(1), y(1) \rangle \ge \beta\|y(\beta) - y(1) \|^2 \ge 0.
\]
Therefore, $\| y(\beta)\| \ge \|y(1)\|$ and our claim is true in this case as well.
\end{proof}

{\color{black}
Note that our measure of non-optimality in \eqref{Vx} is an upper  bound for the squared norm of the gradient, when $X=\mathbb{R}^n$. For the constrained case, it can also be related to the existing ones in the literature. To do so, we need to view other algorithms in the primal-dual space. For example, for the proximal point mapping used in \cite{DavBen18,davis2019stochastic},
\[
\hat{y} = \argmin_{y\in X} \left\{ F(y) + \frac{1}{2} \|y - x\|^2\right\},
\]
the squared distance $\|\hat{y}-x\|^2$ is employed as a measure of non-optimality (the use of a parameter $\gamma$ there can be dealt with by minor adjustments). By optimality conditions of the above subproblem, we have
$\hat{y} = \bar{y}(x,\hat{z},1)$ with $\hat{z}=\nabla F(\hat{y})$. If we view the method of \cite{DavBen18} as generating
primal--dual pairs of form $(x,\hat{z})$, we obtain
\[
\| \hat{y}- x\|^2 \le V(x,\hat{z}) = \| \hat{y}- x\|^2 + \| \hat{z} -\nabla F(x)\|^2 \le (1+ L_{\nabla F}^2) \| \hat{y}- x\|^2.
\]
It follows that both optimality criteria would be equivalent in the primal--dual space, if $\hat{z}$ were observed. In the (accelerated) projected gradient method of \cite{GhaLan16,GhaLanZhang16}, the optimality criterion is the squared distance $\|\tilde{y}-x\|^2$, where
\[
\tilde{y} = \argmin_{y\in X} \left\{ \langle \nabla F(x), y \rangle + \frac{1}{2} \|y - x\|^2\right\}.
\]
Evidently,  $\tilde{y} = \bar{y}(x,\tilde{z},1)$ and if we could see the dual vector $\tilde{z}=\nabla F(x)$ we would obtain
\[
\| \tilde{y}- x\|^2 = V(x,\tilde{z}) = \| \hat{y}- x\|^2 + \| \tilde{z} -\nabla F(x)\|^2.
\]
It should be mentioned that while the above $\hat z$ and $\tilde z$ are not computable under stochastic setting,
the vector $z_k$ defined in \eqref{def_zk} is computed every iteration and can be used as an online estimate of $\nabla F(x)$.
}

\section{Convergence Analysis}
In this section, we provide convergence analysis of Algorithm~\ref{alg_NASA}. To do so, we need the following assumption.

\begin{assumption} \label{f-g-assum}
The functions $f$ and $g$ and their derivatives are Lipschitz continuous.
\end{assumption}

\vgap

This immediately implies that the gradient of the composite function $F$ is Lipschitz continuous.

\begin{lemma} \label{f_lips}
Under Assumption~\ref{f-g-assum}, the gradient of the function $F$ defined in \eqref{main_prob} is Lipschitz continuous with $L_{\nabla F}:= L_g^2 L_{\nabla f}+L_f L_{\nabla g}$.
\end{lemma}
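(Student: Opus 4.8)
The plan is to compute the gradient of $F = f \circ g$ via the chain rule and then bound the difference $\|\nabla F(x) - \nabla F(y)\|$ by inserting and subtracting a hybrid term, exploiting the Lipschitz and boundedness properties bundled in Assumption~\ref{f-g-assum}. By the chain rule, $\nabla F(x) = [\nabla g(x)]^T \nabla f(g(x))$, so for any $x,y$ I would write
\[
\nabla F(x) - \nabla F(y) = [\nabla g(x)]^T \nabla f(g(x)) - [\nabla g(y)]^T \nabla f(g(y)).
\]

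\emph{Key steps.} First I would add and subtract the cross term $[\nabla g(x)]^T \nabla f(g(y))$ to split the difference into two pieces:
\[
\nabla F(x) - \nabla F(y) = [\nabla g(x)]^T\big(\nabla f(g(x)) - \nabla f(g(y))\big) + \big([\nabla g(x)]^T - [\nabla g(y)]^T\big)\nabla f(g(y)).
\]
For the first piece, I would bound $\|[\nabla g(x)]^T\|$ by $L_g$ (the Lipschitz constant of $g$ controls the operator norm of its Jacobian), then use Lipschitz continuity of $\nabla f$ together with Lipschitz continuity of $g$ to get $\|\nabla f(g(x)) - \nabla f(g(y))\| \le L_{\nabla f}\,\|g(x)-g(y)\| \le L_{\nabla f} L_g \|x-y\|$; this yields a contribution of $L_g^2 L_{\nabla f}\|x-y\|$. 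For the second piece, I would bound $\|\nabla f(g(y))\|$ by $L_f$ (Lipschitz continuity of $f$ bounds its gradient) and use Lipschitz continuity of $\nabla g$ to get $\|[\nabla g(x)]^T - [\nabla g(y)]^T\| \le L_{\nabla g}\|x-y\|$, contributing $L_f L_{\nabla g}\|x-y\|$. Adding the two contributions via the triangle inequality gives the stated constant $L_{\nabla F} = L_g^2 L_{\nabla f} + L_f L_{\nabla g}$.

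\emph{Main obstacle.} The only subtle point is justifying that the two operator-norm bounds $\|\nabla g\| \le L_g$ and $\|\nabla f\| \le L_f$ follow from the phrase ``$f$ and $g$ are Lipschitz continuous'' in Assumption~\ref{f-g-assum}; this is the standard fact that a differentiable function is $L$-Lipschitz if and only if its derivative is bounded in norm by $L$, so I would invoke it without belaboring the calculation. Everything else is a routine application of the triangle inequality and submultiplicativity of the operator norm, so no genuine difficulty arises; the proof is essentially the hybrid-term argument above.
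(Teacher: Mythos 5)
Your proposal is correct and matches the paper's proof essentially verbatim: the paper also applies the chain rule, inserts the same cross term $\nabla g(x)^\top \nabla f(g(\hat x))$, and bounds the two pieces by $L_g^2 L_{\nabla f}\|x-\hat x\|$ and $L_f L_{\nabla g}\|x-\hat x\|$ using the standard fact that Lipschitz continuity of $f$ and $g$ bounds $\|\nabla f\|$ and $\|\nabla g\|$. No gaps; the argument is the same.
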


\begin{proof}
Let $x,\hat x \in X$ be given. Then, by the chain rule we have
\begin{align*}
\|\nabla F(x)-\nabla F(\hat x)\| &= \|\nabla g(x)^\top \nabla f(g(x)) - \nabla g(\hat x)^\top \nabla f(g(\hat x))\| \nn \\
&\le \|\nabla g(x)\| \|\nabla f(g(x))-\nabla f(g(\hat x))\| + \|\nabla f(g(\hat x))\| \|\nabla g(x)-\nabla g(\hat x)\|\\
& \le (L_g^2 L_{\nabla f}+L_f L_{\nabla g}) \|x -\hat x\|.
\end{align*}
\end{proof}

\vgap
The next result about the subproblem employed at Step 2 of Algorithm~\ref{alg_NASA} will be used in our convergence analysis.

\begin{lemma} \label{eta_lips}
Let $\eta(x,z)$ be the optimal value of subproblem \eqref{QP} for any $(x,z)$, \emph{i.e.},
\beq \label{QP_g}
\eta(x,z) = \min_{y \in X}\  \left\{\langle z, y-x \rangle + \frac{\beta}{2} \|y-x\|^2\right\}.
\eeq
Then the gradient of $\eta$ w.r.t. $(x,z)$ is Lipschitz continuous with the constant
\[L_{\nabla \eta} = 2\sqrt{(1+\beta)^2+(1+\tfrac{1}{2\beta})^2}.\]
\end{lemma}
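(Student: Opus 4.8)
The plan is to obtain a closed form for $\nabla\eta$ through the envelope theorem and then estimate the Lipschitz constants of its $x$- and $z$-blocks separately, combining them at the very end.

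Since the objective in \eqref{QP_g} is jointly continuously differentiable in $(y,x,z)$ and strongly convex in $y$ with modulus $\beta$ over the fixed closed convex set $X$, its minimizer is unique, equals $\bar y(x,z,\beta)=\Pi_X(x-\tfrac1\beta z)$, and depends Lipschitz-continuously on $(x,z)$. Consequently the marginal function $\eta$ is continuously differentiable, and by the envelope (Danskin) theorem its gradient is the partial gradient of the objective evaluated at the optimal $y=\bar y(x,z,\beta)$ with $y$ held fixed. Writing $\bar y=\bar y(x,z,\beta)$, this gives
\[
\nabla_x\eta(x,z) = -z-\beta(\bar y-x),\qquad \nabla_z\eta(x,z)=\bar y-x.
\]

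Next I would fix two pairs $(x_1,z_1)$ and $(x_2,z_2)$ and abbreviate $\Delta x=x_1-x_2$, $\Delta z=z_1-z_2$, and $\Delta\bar y=\bar y(x_1,z_1,\beta)-\bar y(x_2,z_2,\beta)$. Nonexpansiveness of the projection $\Pi_X$ yields $\|\Delta\bar y\|\le\|\Delta x-\tfrac1\beta\Delta z\|\le\|\Delta x\|+\tfrac1\beta\|\Delta z\|$, and hence $\|\Delta\bar y-\Delta x\|\le 2\|\Delta x\|+\tfrac1\beta\|\Delta z\|$. Substituting the gradient formulas, the $z$-block satisfies $\|\nabla_z\eta(x_1,z_1)-\nabla_z\eta(x_2,z_2)\|=\|\Delta\bar y-\Delta x\|\le 2\|\Delta x\|+\tfrac1\beta\|\Delta z\|$, while the $x$-block satisfies $\|\nabla_x\eta(x_1,z_1)-\nabla_x\eta(x_2,z_2)\|\le\|\Delta z\|+\beta\|\Delta\bar y-\Delta x\|\le 2\beta\|\Delta x\|+2\|\Delta z\|$.

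Finally, using $\|\Delta x\|\le\|(\Delta x,\Delta z)\|$ and $\|\Delta z\|\le\|(\Delta x,\Delta z)\|$, the two blocks are Lipschitz in $(x,z)$ with constants $2(1+\beta)$ and $2(1+\tfrac1{2\beta})$, respectively. Adding the squared block bounds,
\[
\|\nabla\eta(x_1,z_1)-\nabla\eta(x_2,z_2)\|^2\le\big[4(1+\beta)^2+4(1+\tfrac1{2\beta})^2\big]\,\|(\Delta x,\Delta z)\|^2,
\]
which gives exactly $L_{\nabla \eta}=2\sqrt{(1+\beta)^2+(1+\tfrac1{2\beta})^2}$. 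The only delicate step is the first one — justifying differentiability of the marginal function and the envelope formula for $\nabla\eta$; this rests on strong convexity in $y$ and on $X$ being independent of the parameters $(x,z)$. Everything after that is the triangle inequality together with nonexpansiveness of $\Pi_X$, and the specific (slightly loose) form of the constant arises from bounding $a\|\Delta x\|+b\|\Delta z\|$ by $(a+b)\|(\Delta x,\Delta z)\|$ rather than by the sharper $\sqrt{a^2+b^2}\,\|(\Delta x,\Delta z)\|$.
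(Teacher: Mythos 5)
Your proposal is correct and follows essentially the same route as the paper: both derive $\nabla_x\eta(x,z)=-z+\beta(x-\bar y)$ and $\nabla_z\eta(x,z)=\bar y-x$ from uniqueness of the minimizer (envelope theorem), then use nonexpansiveness of $\Pi_X$ to control $\bar y$ and combine the two block estimates into the constant $L_{\nabla\eta}=2\sqrt{(1+\beta)^2+(1+\tfrac{1}{2\beta})^2}$. The only cosmetic difference is the bookkeeping at the end --- the paper sums the block norms and applies the Cauchy--Schwarz inequality to $2(1+\beta)\|\Delta x\|+(2+\tfrac{1}{\beta})\|\Delta z\|$, whereas you bound each block by the full norm $\|(\Delta x,\Delta z)\|$ and take the Pythagorean sum --- yielding the identical constant.
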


\begin{proof}
Let $\bar y(x,z) \in X$ be the solution of \eqref{QP_g}. Since the solution is unique, the partial derivatives of the optimal value function $\eta$ are given by
\[
\nabla_x \eta(x,z) = -z+\beta(x-\bar y(x,z)), \ \ \nabla_z \eta(x,z) = \bar y(x,z)-x.
\]
Hence, for any  $(x,z)$ and $(\hat x,\hat z)$, we have
\beqa
\|\nabla \eta(x,z) - \nabla \eta(\hat x,\hat z)\| &\le& \|\nabla_x \eta(x,z) - \nabla_x \eta(\hat x,\hat z)\| + \|\nabla_z \eta(x,z) - \nabla_z \eta(\hat x,\hat z)\| \nn \\
&\le& 2(1+\beta)\|x-\hat x\|+ (2+1/\beta)\|z-\hat z\| \le L_{\nabla \eta}\|(x,z)-(\hat x,\hat z)\|,\nn
\eeqa
where the inequalities follow from the nonexpansiveness of the projection operator and the Cauchy-Schwarz inequality, respectively.
\end{proof}

The proof of convergence of Algorithm~\ref{alg_NASA} follows from the analysis of the following merit function:
\beq \label{def_merit}
W(x,z,u) = a (F(x)-F^*) - \eta(x,z) + \frac{\gamma}{2}\| g(x)-u\|^2  ,
\eeq
where $\gamma>0$ and $\eta(x,z)$ is the optimal value of subproblem \eqref{QP_g}.

\begin{lemma} \label{main_convergence}
Let $\{x^k,z^k,y^k,u^k\}_{k \ge 0}$ be the sequence generated by Algorithm~\ref{alg_NASA}. Also assume that Assumption~\ref{f-g-assum} holds, and
\beq \label{const_cond}
\beta_k = \beta>0 \ \ \forall k \ge 0, \qquad 2(a \beta-c)(\gamma b-2c) \ge L_g^2(a L_{\nabla f}+\gamma)^2
\eeq
for some positive constants $c$ and $\gamma$. Then
\beq \label{main_rec}
c \sum_{k=0}^{N-1} \tau_k \left(\|d^k\|^2 +\|g(x^k)-u^k\|^2 \right) \le W(x^0,z^0,u^0)+\sum_{k=0}^{N-1} r^{k+1} \qquad \forall N \ge 1,
\eeq
where, for any $k \ge 0$,
\begin{align}
 d^k &= y^k-x^k,\nn\\
r^{k+1}&= \frac{\tau_k^2}{2} \Big([a L_{\nabla F}+ L_{\nabla \eta}+ \gamma L_g^2+2 a L_g^2 L_{\nabla f}] \|d^k\|^2 + b^2\|g(x^{k+1})-G^{k+1}\|^2 \Big)\nn \\
&{\quad} +\tau_k\Big(\gamma b(1-b\tau_k) \langle g(x^{k+1})-u^k, \Delta^g_k\rangle  + a \langle d^k,\Delta^F_k\rangle \Big)+\frac{L_{\nabla \eta}}{2} \|z^{k+1}-z^k\|^2,\nn \\
\Delta^g_k =& g(x^{k+1})-G^{k+1}, \qquad \Delta^F_k :=  \nabla g(x^{k+1})^\top \nabla f(u^k)-\big[J^{k+1}\big]^\top s^{k+1}. \label{def_rk}
\end{align}

\end{lemma}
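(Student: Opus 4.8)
The plan is to establish the one--step estimate
\[
W(x^{k+1},z^{k+1},u^{k+1})-W(x^k,z^k,u^k)\le -c\tau_k\big(\|d^k\|^2+\|g(x^k)-u^k\|^2\big)+r^{k+1},
\]
and then sum it over $k=0,\dots,N-1$. Each of the three summands of $W$ in \eqref{def_merit} is nonnegative: $F(x)\ge F^*$; $\eta(x,z)\le 0$ because $y=x$ is feasible in \eqref{QP_g} and attains value $0$; and $\tfrac{\gamma}{2}\|g(x)-u\|^2\ge 0$. Hence $W\ge 0$, so after telescoping the endpoint $W(x^N,z^N,u^N)$ may be dropped, which yields exactly \eqref{main_rec}. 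Everything therefore reduces to the one--step estimate, which I would obtain by bounding the increment of each of the three terms of $W$ separately and then recombining.

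For the term $a(F(x^{k+1})-F(x^k))$ I would use the descent inequality for the $L_{\nabla F}$--smooth function $F$ (Lemma~\ref{f_lips}), written so that the linear part carries $\nabla F(x^{k+1})$, giving $a\tau_k\langle\nabla F(x^{k+1}),d^k\rangle+\tfrac{a L_{\nabla F}}{2}\tau_k^2\|d^k\|^2$ via $x^{k+1}-x^k=\tau_k d^k$ from \eqref{def_xk}. For $-\eta$ I would apply the two--sided quadratic bound coming from Lipschitzness of $\nabla\eta$ (Lemma~\ref{eta_lips}) and substitute the explicit partials $\nabla_x\eta(x^k,z^k)=-z^k-\beta d^k$ and $\nabla_z\eta(x^k,z^k)=d^k$ (valid because $\bar{y}(x^k,z^k)=y^k$), together with \eqref{def_zk} in the form $z^{k+1}-z^k=a\tau_k([J^{k+1}]^\top s^{k+1}-z^k)$; this produces the linear contributions $\tau_k\langle z^k,d^k\rangle+\beta\tau_k\|d^k\|^2$, $a\tau_k\langle z^k,d^k\rangle$, and $-a\tau_k\langle d^k,[J^{k+1}]^\top s^{k+1}\rangle$, plus the remainder $\tfrac{L_{\nabla\eta}}{2}(\tau_k^2\|d^k\|^2+\|z^{k+1}-z^k\|^2)$. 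For $\tfrac{\gamma}{2}\|g-u\|^2$ I would insert \eqref{def_uk} through the identity $g(x^{k+1})-u^{k+1}=(1-b\tau_k)(g(x^{k+1})-u^k)+b\tau_k\Delta^g_k$ and expand the square.

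The decisive step is the recombination. The optimality condition for \eqref{QP}, namely $\langle z^k+\beta d^k,\xi-y^k\rangle\ge 0$ for all $\xi\in X$, taken at $\xi=x^k$, yields $\langle z^k,d^k\rangle+\beta\|d^k\|^2\le 0$; applied to the collected coefficient $(1+a)\tau_k\langle z^k,d^k\rangle+\beta\tau_k\|d^k\|^2$ it gives the descent term $-a\beta\tau_k\|d^k\|^2$. Next I would replace $[J^{k+1}]^\top s^{k+1}$ by $\nabla g(x^{k+1})^\top\nabla f(u^k)-\Delta^F_k$ (the algebraic identity from \eqref{def_rk}), sending $a\tau_k\langle d^k,\Delta^F_k\rangle$ into $r^{k+1}$ and merging the mean part with $\langle\nabla F(x^{k+1}),d^k\rangle$ to form $a\tau_k\langle\nabla F(x^{k+1})-\nabla g(x^{k+1})^\top\nabla f(u^k),d^k\rangle$. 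Since this difference equals $\nabla g(x^{k+1})^\top(\nabla f(g(x^{k+1}))-\nabla f(u^k))$, Lipschitzness of $\nabla f$ and $\|\nabla g\|\le L_g$ bound it by $aL_gL_{\nabla f}\tau_k\|g(x^{k+1})-u^k\|\,\|d^k\|$. The term $\tfrac{\gamma}{2}\|g-u\|^2$ contributes, after expanding $(1-b\tau_k)^2$ and writing $g(x^{k+1})-u^k=(g(x^k)-u^k)+(g(x^{k+1})-g(x^k))$, the descent $-\gamma b\tau_k\|g(x^k)-u^k\|^2$ together with a further cross term $\gamma\langle g(x^k)-u^k,g(x^{k+1})-g(x^k)\rangle$. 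Using $\|g(x^{k+1})-g(x^k)\|\le L_g\tau_k\|d^k\|$ to rewrite the $g(x^{k+1})$--quantities in terms of $g(x^k)$ (generating the $\gamma L_g^2$-- and $aL_g^2L_{\nabla f}$--type ${\cal O}(\tau_k^2)$ corrections that land in $r^{k+1}$), all the first--order cross terms accumulate with total coefficient $L_g(aL_{\nabla f}+\gamma)$.

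It then remains to dominate the surviving first--order part, which with $s=\|d^k\|$ and $t=\|g(x^k)-u^k\|$ has the shape $-a\beta s^2+L_g(aL_{\nabla f}+\gamma)st-\gamma b\,t^2$, by $-c\tau_k(s^2+t^2)$; this is precisely the requirement that the quadratic form $(a\beta-c)s^2-L_g(aL_{\nabla f}+\gamma)st+(\gamma b-2c)t^2$ be nonnegative, which is what hypothesis \eqref{const_cond} encodes. Every leftover piece --- the $\tfrac{\tau_k^2}{2}$ multiples of $\|d^k\|^2$ from the descent lemma and the $\nabla\eta$--remainder, the $\gamma L_g^2$ and $2aL_g^2L_{\nabla f}$ corrections from replacing $g(x^{k+1})$ by $g(x^k)$, the $b^2$--weighted $\|\Delta^g_k\|^2$ term, and the two noise inner products with $\Delta^g_k$ and $\Delta^F_k$ --- is exactly the content of $r^{k+1}$ in \eqref{def_rk}. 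I expect the main obstacle to be this bookkeeping rather than any single idea: one must track how the ${\cal O}(\tau_k^2)$ corrections created when passing from $g(x^{k+1})-u^k$ to $g(x^k)-u^k$ (via Young's inequality) redistribute between the quadratic form and $r^{k+1}$, so that the coefficients match and the conservative constant in \eqref{const_cond} (with $\gamma b-2c$) comes out as stated.
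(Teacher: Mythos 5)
Your proposal follows the paper's own proof essentially step for step: the same three-part decomposition of the one-step decrease of $W$ (smoothness descent for $F$, the Lipschitz-gradient bound for $\eta$ with its explicit partials combined with the QP optimality condition $\langle z^k,d^k\rangle+\beta\|d^k\|^2\le 0$, and expansion of the averaging recursion for the $u$-term), the same splitting of $[J^{k+1}]^\top s^{k+1}$ into mean plus $\Delta^F_k$ with the bias bound \eqref{error_F}, and the same quadratic-form argument in $(\|d^k\|,\|g(x^k)-u^k\|)$ closed off by \eqref{const_cond} and telescoping via $W\ge 0$. The only blemish is a factor-of-two slip in the bookkeeping of the third term (the descent there is $-\tfrac{\gamma b}{2}\tau_k\|g(x^k)-u^k\|^2$ via $(1-b\tau_k)^2\le 1-b\tau_k$, so the $t^2$ coefficient in the quadratic form is $\tfrac{\gamma b}{2}-c$, whose discriminant condition is exactly $2(a\beta-c)(\gamma b-2c)\ge L_g^2(aL_{\nabla f}+\gamma)^2$), which you flag yourself and which does not affect correctness.
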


\begin{proof}
We estimate the decrease of the three terms of the function $W(x,z,u)$ in iteration $k$. \\
\textbf{1.} Due to Assumption~\ref{f-g-assum} and in view of Lemma~\ref{f_lips}, we have
\[
F(x^k) - F(x^{k+1}) \ge \langle \nabla F(x^{k+1}), x^k - x^{k+1} \rangle -\frac{L_{\nabla F}}{2} \|x^k - x^{k+1}\|^2.
\]
After re-arranging the terms and using \eqref{def_xk}, we obtain
\beq \label{f_bnd}
F(x^{k+1})- F(x^k) \le \tau_k \langle \nabla F(x^{k+1}), d^k \rangle +\frac{ L_{\nabla F}\tau_k^2}{2} \|d^k\|^2.
\eeq
\textbf{2.} By \eqref{def_xk}, \eqref{def_zk}, and  Lemma~\ref{eta_lips}, we have
\beqa\label{eta_bnd1}
\eta(x^k,z^k)- \eta(x^{k+1},z^{k+1}) &\le& \langle z^k+ \beta_k(y^k-x^k), x^{k+1}-x^k \rangle- \langle y^k-x^k, z^{k+1}-z^k\rangle \nn \\
&+&\frac{L_{\nabla \eta}}{2} \left[\|x^{k+1}-x^k\|^2 +\|z^{k+1}-z^k\|^2 \right]\nn \\
&=& \tau_k \langle (1+a)z^k+ \beta_k d^k, d^k \rangle- a \tau_k \langle d^k, \big[J^{k+1}\big]^T s^{k+1}\rangle \nn \\
&+&\frac{L_{\nabla \eta}}{2} \left[\|x^{k+1}-x^k\|^2 +\|z^{k+1}-z^k\|^2 \right].
\eeqa
Due to the optimality condition of subproblem \eqref{QP}, we have $\langle z^k+\beta_k(y^k-x^k), y-y^k \rangle \ge 0 $, for all  $y \in X$, which together with the choice of $y=x^k$ implies that
\beq \label{opt_QP}
\langle z^k, d^k \rangle +\beta_k \|d^k\|^2 \le 0.
\eeq
Combining the last relation with \eqref{eta_bnd1}, we obtain
\beqa\label{eta_bnd2}
\eta(x^k,z^k)- \eta(x^{k+1},z^{k+1}) &\le& -a\beta_k \tau_k \|d^k\|^2- a \tau_k \langle d^k, \nabla g(x^{k+1})^\top \nabla f(u^k)\rangle \nn \\
&+& a \tau_k \langle d^k, \nabla g(x^{k+1})^\top \nabla f(u^k)-\big[J^{k+1}\big]^\top s^{k+1}\rangle\nn\\
&+& \frac{L_{\nabla \eta}}{2} \left[\|x^{k+1}-x^k\|^2 +\|z^{k+1}-z^k\|^2 \right].
\eeqa
\textbf{3.} By \eqref{def_uk} we have
\beqa
\|g(x^{k+1})-u^{k+1}\|^2 &=& (b \tau_k)^2\|g(x^{k+1})-G^{k+1}\|^2+ (1-b\tau_k)^2\|g(x^{k+1})-u^k\|^2\nn \\
&+&2b\tau_k(1-b\tau_k) \langle g(x^{k+1})-G^{k+1}, g(x^{k+1})-u^k\rangle, \nn \\
\|g(x^{k+1})-u^k\|^2 &\le&\|g(x^k)-u^k\|^2+ L_g^2 \tau_k^2 \|d^k\|^2+2L_g\tau_k\|d^k\|\|g(x^k)-u^k\|,\label{g_bnd}
\eeqa
where the last inequality follows from \eqref{def_xk} and the Lipschitz continuity of $g$. Moreover, using \eqref{def_xk} and Assumption~\ref{f-g-assum}, we obtain
\beq \label{error_F}
\langle d^k, \nabla F(x^{k+1})-\nabla g(x^{k+1})^\top \nabla f(u^k)\rangle \le L_g L_{\nabla f}\left[\tau_k L_g\|d^k\|^2+\|d^k\|\|g(x^k)-u^k\|\right].
\eeq
\textbf{4.} The overall estimate is obtained by combining \eqref{def_merit}, \eqref{f_bnd}, \eqref{eta_bnd2}, \eqref{g_bnd}, and \eqref{error_F}:
\begin{align}
&W(x^{k+1},z^{k+1},u^{k+1})-W(x^{k},z^{k},u^{k}) \nn \\
&\le -\tau_k \Big( a \beta_k \|d^k\|^2 +\frac{\gamma b}{2} \|g(x^k)-u^k\|^2  - (a L_g L_{\nabla f}+\gamma L_g) \|d^k\| \|g(x^k)-u^k\| \Big) + r^{k+1},\nn
\end{align}
where $r^{k+1}$ is defined in \eqref{def_rk}. Hence, when condition \eqref{const_cond} holds, we have
\[
W(x^{k+1},z^{k+1},u^{k+1})-W(x^{k},z^{k},u^{k}) \le - c \tau_k \left( \|d^k\|^2 +\|g(x^k)-u^k\|^2 \right) + r^{k+1}.
\]
Observe that $\eta(x,z) \le 0$ for any $(x,z)$, due to \eqref{opt_QP}. Therefore, $W(x,z,u) \ge 0$ for all $(x,z,u)$. Summing up both sides of the above inequalities and re-arranging the terms, we obtain \eqref{main_rec}.
\end{proof}

\vgap

As a consequence of the above result, we can provide upper bounds for the sequences generated by Algorithm~\ref{alg_NASA}.

\begin{proposition}\label{prop_conv}
Let $\{x^k,z^k,y^k,u^k\}_{k \ge 0}$ be the sequence generated by Algorithm~\ref{alg_NASA} and Assumption~\ref{stoch_assump} hold. Then:\\
{\rm \textbf{(a)}} If $\tau_0 =1/a$, we have
\[
\beta_k^2 \Eb\big[\|d^k\|^2 | \Fc_{k-1}\big] \le \Eb\big[\|z^k\|^2 | \Fc_{k-1}\big] \le \sigma^2_J \sigma^2_s \qquad \forall k \ge 1;
\]
{\rm \textbf{(b)}} If Assumption~\ref{f-g-assum} also holds, {\color{black}$\beta_k =\beta>0$ for all $k$ and $a \tau_k \le 1/\sqrt{2}$ for all $k \ge 1$}, we have
\beqa
\sum_{k=0}^\infty \Eb[\|z^{k+1}-z^k\|^2 | \Fc_k] &\le& 2{\color{black}\left[\|z^0\|^2+24 a^2 \sigma^2_J \sigma^2_s \sum_{k=0}^\infty \tau_k^2 \right]},\nn \\
\sum_{k=0}^\infty \Eb[r^{k+1} | \Fc_k] &\le& \sigma^2 \sum_{k=0}^\infty \tau_k^2,\label{sum_rk}
\eeqa
where
\beq
\sigma^2= \frac{1}{2} \Big([L_{\nabla F}+ L_{\nabla \eta}+ \gamma L_g^2+2 a L_g^2 L_{\nabla f}]\frac{\sigma^2_J \sigma^2_s}{\beta^2} + b^2 \sigma_g^2 + 4 L_{\nabla \eta} {\color{black}\left[\|z^0\|^2+24 a^2 \sigma^2_J \sigma^2_s\right]}\Big).\label{rk_bnd}
\eeq

\end{proposition}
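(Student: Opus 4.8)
The plan is to handle the three displayed bounds in turn, using the projection optimality condition \eqref{opt_QP}, the averaging recursion \eqref{def_zk}, and the unbiasedness and conditional independence from Assumption~\ref{stoch_assump}. For part \textbf{(a)}, I would first record the pointwise consequence of \eqref{opt_QP}: since $\langle z^k,d^k\rangle+\beta_k\|d^k\|^2\le 0$, Cauchy--Schwarz gives $\beta_k\|d^k\|^2\le-\langle z^k,d^k\rangle\le\|z^k\|\,\|d^k\|$, so $\beta_k^2\|d^k\|^2\le\|z^k\|^2$ surely, and taking $\Eb[\,\cdot\mid\Fc_{k-1}]$ yields the first inequality. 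For the bound on $\Eb\|z^k\|^2$, the two ingredients are that conditional independence of $J^{k}$ and $s^{k}$ together with $\|[J^{k}]^\top s^{k}\|^2\le\|J^{k}\|^2\|s^{k}\|^2$ gives $\Eb[\|[J^{k}]^\top s^{k}\|^2\mid\Fc_{k-1}]\le\sigma^2_J\sigma^2_s$, and that \eqref{def_zk} writes $z^{k}$ as a convex combination of $z^{k-1}$ and $[J^{k}]^\top s^{k}$ because $a\tau_{k-1}\in(0,1]$. With $\tau_0=1/a$ we have $a\tau_0=1$, hence $z^1=[J^1]^\top s^1$ and the bound holds at $k=1$; convexity of $\|\cdot\|^2$ then propagates it through the recursion $\Eb[\|z^{k}\|^2\mid\Fc_{k-1}]\le(1-a\tau_{k-1})\|z^{k-1}\|^2+a\tau_{k-1}\sigma^2_J\sigma^2_s$, and iterating the resulting bound on $\Eb\|z^{k}\|^2$ gives the uniform estimate.

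For the first inequality of part \textbf{(b)}, I would use \eqref{def_zk} to write $z^{k+1}-z^k=a\tau_k(w^{k+1}-z^k)$ with $w^{k+1}:=[J^{k+1}]^\top s^{k+1}$, so that $\Eb[\|z^{k+1}-z^k\|^2\mid\Fc_k]\le 2a^2\tau_k^2\big(\sigma^2_J\sigma^2_s+\|z^k\|^2\big)$. Summability then reduces to controlling $\sum_k\tau_k^2\,\Eb\|z^k\|^2$. Solving the recursion $\Eb\|z^{k+1}\|^2\le(1-a\tau_k)\Eb\|z^k\|^2+a\tau_k\sigma^2_J\sigma^2_s$ (valid since $a\tau_k\le1$) gives $\Eb\|z^k\|^2\le\prod_{j=0}^{k-1}(1-a\tau_j)\,\|z^0\|^2+\sigma^2_J\sigma^2_s$. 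The decisive step is the telescoping estimate
\[
\sum_{k\ge0}a\tau_k\prod_{j=0}^{k-1}(1-a\tau_j)=1-\lim_{k}\prod_{j=0}^{k-1}(1-a\tau_j)\le 1,
\]
which, combined with $a\tau_k\le1$, bounds the contribution of $\|z^0\|^2$ by a fixed multiple of $\|z^0\|^2$ rather than by $\|z^0\|^2\sum_k\tau_k^2$, while the noise contributes $O\!\big(\sigma^2_J\sigma^2_s\sum_k\tau_k^2\big)$. Collecting these reproduces the claimed form $2\big[\|z^0\|^2+\text{const}\cdot a^2\sigma^2_J\sigma^2_s\sum_k\tau_k^2\big]$, the constant $24$ being a convenient overestimate.

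For the second inequality of part \textbf{(b)}, I would take $\Eb[\,\cdot\mid\Fc_k]$ in the definition \eqref{def_rk} of $r^{k+1}$. The key simplification is that both inner-product terms have zero conditional mean: $\Eb[\Delta^g_k\mid\Fc_k]=0$ since $\Eb[G^{k+1}\mid\Fc_k]=g(x^{k+1})$, and $\Eb[\Delta^F_k\mid\Fc_k]=0$ since $J^{k+1}$ and $s^{k+1}$ are conditionally independent with means $\nabla g(x^{k+1})$ and $\nabla f(u^k)$; because $g(x^{k+1})-u^k$ and $d^k$ are $\Fc_k$-measurable (here the shifted indexing making $x^{k+1}$ and $d^k$ $\Fc_k$-measurable is essential), the two products drop out. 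The surviving terms are bounded one by one: $\Eb[\|g(x^{k+1})-G^{k+1}\|^2\mid\Fc_k]\le\sigma^2_G$, the $\|d^k\|^2$ factor is controlled through part \textbf{(a)} via $\|d^k\|^2\le\|z^k\|^2/\beta^2$ together with the second-moment bound on $\|z^k\|^2$, and $\sum_k\Eb[\|z^{k+1}-z^k\|^2\mid\Fc_k]$ is supplied by the first inequality of part \textbf{(b)}. Summing over $k$ and grouping the Lipschitz and variance constants reproduces $\sigma^2$ as in \eqref{rk_bnd}, giving $\sum_k\Eb[r^{k+1}\mid\Fc_k]\le\sigma^2\sum_k\tau_k^2$.

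The step I expect to be the main obstacle is the first inequality of part \textbf{(b)}: securing an \emph{additive}, $\sum_k\tau_k^2$-free dependence on $\|z^0\|^2$ while keeping the noise term at the sharp order $\sum_k\tau_k^2$. A direct use of the exact moving-average identity telescopes $\|z^k\|^2$ but leaves a residual of order $\sum_k\tau_k$, which is too weak, whereas a crude second-moment bound attaches $\|z^0\|^2$ to $\sum_k\tau_k^2$. The geometric-decay telescoping displayed above is what reconciles the two, exploiting that the influence of $z^0$ is damped multiplicatively and requiring only $a\tau_k\le1$. A secondary but delicate point is the measurability and conditional-independence bookkeeping needed to annihilate the inner-product terms of $r^{k+1}$.
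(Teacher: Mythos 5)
Your proposal is correct, and for part (a) and for the bound on $\sum_k \Eb[r^{k+1}\,|\,\Fc_k]$ it matches the paper's argument: the paper also obtains the first inequality of (a) from \eqref{opt_QP} plus Cauchy--Schwarz, obtains the second by writing $z^k=\sum_{i=0}^{k-1}\alpha_{i,k}[J^{i+1}]^\top s^{i+1}$ as a convex combination with $\sum_i \alpha_{i,k}=1$ (your one-step induction is exactly this recursion unrolled, using $a\tau_0=1$ and conditional independence of $J$ and $s$), and it disposes of the two inner products in \eqref{def_rk} by the same measurability/unbiasedness bookkeeping you describe, then invokes (a) and the first inequality of (b). Where you genuinely diverge is the first inequality of (b). The paper solves \eqref{def_zk} \emph{backwards}, writing $z^{k+1}-z^k=\frac{a\tau_k}{1-a\tau_k}\big([J^{k+1}]^\top s^{k+1}-z^{k+1}\big)$, so that $\Eb[\|z^{k+1}-z^k\|^2\,|\,\Fc_k]\le \frac{4a^2\sigma_J^2\sigma_s^2\tau_k^2}{(1-a\tau_k)^2}$ for $k\ge 1$ by the uniform moment bound of part (a) applied to $z^{k+1}$; this is precisely where the hypothesis $a\tau_k\le 1/\sqrt{2}$ is used, via $(1-a\tau_k)^{-2}\le 12$, producing the factor $24$, with the $k=0$ term handled separately through $\tau_0=1/a$ to give $2(\|z^0\|^2+\sigma_J^2\sigma_s^2)$. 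Your \emph{forward} decomposition $z^{k+1}-z^k=a\tau_k\big([J^{k+1}]^\top s^{k+1}-z^k\big)$, combined with the refined recursion solution $\Eb\|z^k\|^2\le \prod_{j<k}(1-a\tau_j)\,\|z^0\|^2+\sigma_J^2\sigma_s^2$ and the telescoping $\sum_k a\tau_k\prod_{j<k}(1-a\tau_j)\le 1$, is sharper and more economical: it needs only $a\tau_k\le 1$ (not $1/\sqrt{2}$), does not need $\tau_0=1/a$ for this particular inequality, and yields the constant $4$ in front of $a^2\sigma_J^2\sigma_s^2\sum_k\tau_k^2$ where the paper's route gives $48$ (so $24$ is indeed an overestimate). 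What the paper's route buys in exchange is a per-iteration bound on $\Eb[\|z^{k+1}-z^k\|^2\,|\,\Fc_k]$, term by term, rather than only a bound on the sum.

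One point of care: your telescoping argument operates in total expectation, so strictly it proves $\Eb\sum_k\|z^{k+1}-z^k\|^2\le\cdots$, while the proposition is phrased as a sum of conditional expectations; the same applies to your induction in (a), since $\Eb[\|z^k\|^2\,|\,\Fc_{k-1}]\le(1-a\tau_{k-1})\|z^{k-1}\|^2+a\tau_{k-1}\sigma_J^2\sigma_s^2$ yields the claimed a.s.\ conditional bound only after the tower property. This is not a real gap relative to the paper, whose own proof mixes conditionings in the same way (bounding $\Eb[\|z^k\|^2\,|\,\Fc_{k-1}]$ by $\sum_i\alpha_{i,k}\Eb[\|J^{i+1}\|^2\,|\,\Fc_i]\,\Eb[\|s^{i+1}\|^2\,|\,\Fc_i]$), and only total expectations are used downstream in Theorem~\ref{cvrg_rate_nocvx}; still, you should state your conclusions in total-expectation (or iterated-conditional) form.
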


\begin{proof}
We first show part (a). The first inequality follows immediately from \eqref{opt_QP} and the Cauchy-–Schwarz inequality. Also, defining
\beq \label{def_Gamma}
\Gamma_1 := \left\{
\begin{array}{ll}
 1, & \tau_0 = 1/a,\\
1 - a\tau_0, & \tau_0<1/a,
\end{array} \right. \ \
\Gamma_{k} := \Gamma_1 \prod_{i=1}^{k-1} (1 - a\tau_i)  \ \ \forall k \ge 2,
\eeq
and noting \eqref{def_zk}, we obtain
\[
\frac{z^1}{\Gamma_1} = \frac{(1 - a\tau_0)z^0}{\Gamma_1}+ \frac{a \tau_0}{\Gamma_1}  \big[J^1\big]^\top s^1, \qquad  \frac{z^{k+1}}{\Gamma_{k+1}} = \frac{z^k}{\Gamma_k}+ \frac{a \tau_k}{\Gamma_{k+1}}  \big[J^{k+1}\big]^\top s^{k+1} \quad \forall k \ge 1.
\]
Summing up the above inequalities and assuming that $\tau_0 =1/a$, we obtain, for any $k \ge 1$,
\beq \label{def_zk2}
z^k = \sum_{i=0}^{k-1} \alpha_{i,k} \big[J^{i+1}\big]^\top s^{i+1}, \quad \alpha_{i,k} = \frac{a \tau_i}{\Gamma_{i+1}}\Gamma_k, \quad \sum_{i=0}^{k-1} \alpha_{i,k} = 1,
\eeq
where the last equality follows from the fact that
\[
\sum_{i=0}^{k-1} \frac{a \tau_i}{\Gamma_{i+1}} = \frac{a \tau_0}{\Gamma_1} + \sum_{i=1}^{k-1} \frac{a \tau_i}{(1-a\tau_i)\Gamma_i} = \frac{a \tau_0}{\Gamma_1} + \sum_{i=1}^{k-1} \left(\frac{1}{\Gamma_{i+1}}-\frac{1}{\Gamma_{i}}\right) = \frac{1}{\Gamma_{k}}-\frac{1-a \tau_0}{\Gamma_1}.
\]
Therefore, noting that $\|\cdot\|^2$ is a convex function and using Assumption~\ref{stoch_assump}, we conclude that
\[
\Eb\big[\|z^k\|^2 \big| \Fc_{k-1}\big] \le \sum_{i=0}^{k-1} \alpha_{i,k} \Eb\big[\|J^{i+1}\|^2 \big| \Fc_i\big]\Eb\big[\|s^{i+1}\|^2 \big| \Fc_i\big] \le \sigma^2_J \sigma^2_s \sum_{i=0}^{k-1} \alpha_{i,k} =\sigma^2_J \sigma^2_s.
\]
We now show part (b). By \eqref{def_zk}, {\color{black} the above estimate, and assuming that $\tau_0 =1/a$}, we have
\beqa
{\color{black}\Eb\big[\|z^1-z^0\|^2 \big| \Fc_0\big]} &\le& {\color{black}2 \Big(\|z^0\|^2+\Eb\big[\|[J^1]^\top s^1\|^2 \big| \Fc_0\big] \Big)\le 2 \Big(\|z^0\|^2+\sigma^2_J \sigma^2_s\Big)},\nonumber \\
\Eb\big[\|z^{k+1}-z^k\|^2 \big| \Fc_k\big] &\le& \frac{2 {\color{black}a^2} \tau_k^2}{{\color{black}(1-a\tau_k)^2}} \Big({\color{black} \Eb\big[\|z^{k+1}\|^2 \big| \Fc_{k}\big]}+\Eb\big[\|[J^{k+1}]^\top s^{k+1}\|^2 \big| \Fc_k\big] \Big)\nonumber \\
&\le& \frac{4 {\color{black}a^2} \sigma^2_J \sigma^2_s \tau_k^2}{{\color{black}(1-a\tau_k)^2}} \qquad \forall k \ge 1,\nonumber
\eeqa
implying that
\[
{\color{black}\sum_{k=0}^\infty \Eb[\|z^{k+1}-z^k\|^2 | \Fc_k] \le 2\left[\|z^0\|^2+\sigma^2_J \sigma^2_s \left(1+2a^2 \sum_{k=1}^\infty  \big(\frac{\tau_k}{1-a\tau_k}\big)^2\right)\right]}.
\]
{\color{black}Combining the above inequality with the fact that $\tfrac{1}{(1-a \tau_k)^2} \le 12$ due to the assumption that $a \tau_k \le 1/\sqrt{2}$,} we obtain the first inequality in (b). Finally, due to the equation
\[
\Eb\big[\langle g(x^{k+1})-u^k, \Delta^g_k\rangle \big| \Fc_k\big] = \Eb\big[\langle d^k,\Delta^F_k\rangle  \big| \Fc_k\big] =0,
\]
the second inequality in  (b) follows from the first one in (a) and \eqref{def_rk}.
\end{proof}

\vgap

We are now ready to estimate the quality of the iterates generated by Algorithm~\ref{alg_NASA}. In view of Lemma~\ref{l:opt-bound},
we can bound the optimality measure at iteration $k$ as follows:
\beq\label{def_Vopt}
V(x^k,z^k) \le  \max(1,\beta_k^2) \; \| d^k\|^2 + \|z^k - \nabla F(x^k)\|^2.
\eeq

\begin{theorem} \label{cvrg_rate_nocvx}
Suppose Assumptions  \ref{stoch_assump} and \ref{f-g-assum} are satisfied and  let $\{x^k,z^k,y^k,u^k\}_{k \ge 0}$ be the sequence generated by Algorithm~\ref{alg_NASA}. Moreover, assume that {\color{black}the parameters are chosen so that \eqref{const_cond} holds and stepsizes $\{\tau_k\}$ {\color{black} are deterministic} and satisfy}
\beq\label{const_cond1}
\sum_{i=k+1}^{N} \tau_i \Gamma_i \le \bar c \Gamma_{k+1} \qquad \forall k \ge 0 \ \ \text{and} \ \ \forall N \ge 1,  \ \ {\color{black} \text{and} \ \ a \tau_k \le 1/\sqrt{2} \ \ \forall k \ge 1,}
\eeq
where $\Gamma_k$ is defined in \eqref{def_Gamma}, and $\bar c$ is a positive constant. Then:\\
{\rm \textbf{(a)}} For every $N \ge 2$, we have
\begin{equation}
\label{error_grad}
\begin{aligned}
\sum_{k=1}^N \tau_k \Eb\big[\|\nabla F(x^k)-z^k\|^2 \big|\Fc_{k-1}\big] &\le a \bar c \left(\frac{1}{c}\max(L_1,L_2)\sigma^2+2 a \sigma^2_J \sigma^2_s  \right) \bigg(\sum_{k=0}^{N-1} \tau^2_k\bigg)  \\
&{\quad} + \frac{a \bar{c}}{c}\max(L_1,L_2)W(x^0,z^0,u^0),
\end{aligned}
\end{equation}
where
\beq\label{def_L12}
L_1:= \frac{2L_{\nabla F}^2}{a^2} + 4 L_g^4 L_{\nabla f}^2, \qquad L_2:= 4L_g^2 L_{\nabla f}^2.
\eeq
{\rm \textbf{(b)}} As a consequence, we have
\begin{multline}
\Eb \big[V(x^R,z^R)\big]
\le \frac{1}{\sum_{k=1}^{N-1} \tau_k}\bigg\{
a \bar c \left(\frac{1}{c}\big[\max(L_1,L_2)+\max(1,\beta^2)\big]\sigma^2 +2 a \sigma^2_J \sigma^2_s  \right) \bigg(\sum_{k=0}^{N-1} \tau^2_k\bigg) \\
 +     \frac{1}{c}\Big(a \bar c \max(L_1,L_2)+\max(1,\beta^2)\Big)W(x^0,z^0,u^0) \bigg\},
 \label{main_rec2}
\end{multline}
{\color{black}where the expectation is taken with respect to all random sequences generated by the method and an independent random integer number $R \in \{1,\ldots,N-1\}$}, whose probability distribution is given by
\beq \label{def_probl}
P[R=k] = \frac{\tau_k}{\color{black} \sum_{j=1}^{N-1} \tau_j}.
\eeq
{\rm \textbf{(c)}} Moreover, if $a=b=1$, if the regularization coefficients are constant and equal to:
\beq \label{const_cond2}
\beta_k \equiv \beta = \left(\frac{(1+\alpha)^2}{\alpha} L_g^2+\frac{\alpha}{4}\right)L_{\nabla f},
\eeq
for some $\alpha>0$, and if the stepsizes are equal to:
\beq \label{def_tau}
\tau_0 = 1, \qquad \tau_k \equiv \frac{1}{\sqrt{N}} \qquad \forall k=1,\ldots,N-1,
\eeq
then
\begin{gather}
%\begin{aligned}
\label{main_rec3}
\Eb \big[V(x^R,z^R)\big]  \le \frac{4}{\sqrt{N}-1} \bigg(\frac{2}{\alpha L_{\nabla F}}\big[\max(L_1,L_2)
 +\max(1,\beta^2)\big]\big[W(x^0,z^0,u^0)+\sigma^2\big]+ \sigma^2_J \sigma^2_s \bigg),
%\end{aligned}
\intertext{and}
\Eb \big[\|g(x^R)-u^R\|^2\big]  \le \frac{W(x^0,z^0,u^0)+\sigma^2}{\alpha L_{\nabla F}(\sqrt{N}-1)}.\label{main_rec4}
\end{gather}
\end{theorem}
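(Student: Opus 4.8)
The heart of the argument is part (a); once the averaged gradient $z^k$ is shown to track $\nabla F(x^k)$ in mean square, parts (b) and (c) follow by bookkeeping against the descent already packaged in Lemma~\ref{main_convergence}. I would begin part (a) by converting the averaging step \eqref{def_zk} into a recursion for the error $e^k:=\nabla F(x^k)-z^k$. A direct computation (adding and subtracting $\nabla F(x^{k+1})$ and $\nabla g(x^{k+1})^\top\nabla f(u^k)$) gives
\[
e^{k+1}=(1-a\tau_k)e^k+P_k+a\tau_k\,\Delta^F_k,
\]
where $P_k:=(1-a\tau_k)\big(\nabla F(x^{k+1})-\nabla F(x^k)\big)-a\tau_k\,\nabla g(x^{k+1})^\top\big(\nabla f(u^k)-\nabla f(g(x^{k+1}))\big)$. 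Because $\Gamma_{k+1}=(1-a\tau_k)\Gamma_k$ and the choice $\tau_0=1/a$ forces $\Gamma_1=1$ and annihilates the dependence on the arbitrary initializer $z^0$ (this is exactly why the sum in \eqref{error_grad} starts at $k=1$), unrolling yields the closed form $e^k=\Gamma_k\sum_{i=0}^{k-1}(P_i+a\tau_i\Delta^F_i)/\Gamma_{i+1}$, whose weights $\Gamma_k\,a\tau_i/\Gamma_{i+1}$ are precisely the $\alpha_{i,k}$ of \eqref{def_zk2} with $\sum_{i=0}^{k-1}\alpha_{i,k}=1$.

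The payoff of unrolling through the recursion, rather than comparing $z^k$ directly to $\nabla F(x^k)$, is that $P_i$ carries only the \emph{one-step} drift $\nabla F(x^{i+1})-\nabla F(x^i)$, so that $\|P_i\|/(a\tau_i)\le (L_{\nabla F}/a)\|d^i\|+L_gL_{\nabla f}\|g(x^{i+1})-u^i\|$, and both surviving quantities are controlled by the merit function. I would then bound $\|e^k\|^2$ by twice the squared bias plus twice the squared noise. The $\Delta^F_i$ form martingale differences, so their cross terms vanish in expectation and the conditional independence of $J^{i+1},s^{i+1}$ in Assumption~\ref{stoch_assump} gives $\Eb[\|\Delta^F_i\|^2\mid\Fc_i]\le\sigma_J^2\sigma_s^2$, whence the noise mean square is $\sum_i\alpha_{i,k}^2\,\Eb\|\Delta^F_i\|^2\le\sigma_J^2\sigma_s^2\sum_i\alpha_{i,k}^2$. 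For the bias I would write $\Gamma_k\sum_iP_i/\Gamma_{i+1}=\sum_i\alpha_{i,k}\,P_i/(a\tau_i)$ and apply Jensen's inequality with the weights $\alpha_{i,k}$. Multiplying by $\tau_k$, summing over $k$, and exchanging the order of summation, the hypothesis \eqref{const_cond1} — used in the equivalent forms $\sum_{k>i}\tau_k\alpha_{i,k}\le a\bar c\,\tau_i$ and $\sum_{k>i}\tau_k\Gamma_k^2\le\bar c\,\Gamma_{i+1}^2$ — collapses each inner sum to a single factor proportional to $\tau_i$. After splitting $\|g(x^{i+1})-u^i\|^2\le 2\|g(x^i)-u^i\|^2+2L_g^2\tau_i^2\|d^i\|^2$, the coefficients of $\|d^i\|^2$ and $\|g(x^i)-u^i\|^2$ are exactly $L_1$ and $L_2$ of \eqref{def_L12}. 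The proof of (a) then closes by invoking Lemma~\ref{main_convergence} with Proposition~\ref{prop_conv}(b) to bound $\sum_i\tau_i\big(\|d^i\|^2+\|g(x^i)-u^i\|^2\big)$ in expectation by $\tfrac1c\big(W(x^0,z^0,u^0)+\sigma^2\sum_k\tau_k^2\big)$. I expect this summation-exchange step to be the main obstacle: it closes the loop between the history-dependent bias and the very descent quantity that bounds it, and everything hinges on \eqref{const_cond1} keeping the accumulated averaging weights summable.

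For part (b) I would use the pointwise estimate \eqref{def_Vopt}, $V(x^k,z^k)\le\max(1,\beta^2)\|d^k\|^2+\|\nabla F(x^k)-z^k\|^2$, take expectations, multiply by $\tau_k$, and sum over $k=1,\dots,N-1$; the first term is bounded once more through Lemma~\ref{main_convergence}/Proposition~\ref{prop_conv} and the second through part (a). Dividing by $\sum_{k=1}^{N-1}\tau_k$ and identifying the resulting weighted average with $\Eb[V(x^R,z^R)]$ under the distribution \eqref{def_probl} produces \eqref{main_rec2}.

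Part (c) is then verification and arithmetic. With $a=b=1$ and $\beta$ as in \eqref{const_cond2}, one checks that \eqref{const_cond} holds for an admissible pair $(\gamma,c)$ of order $\alpha L_{\nabla f}$ — for instance $\gamma=\alpha L_{\nabla f}$ and $c=\alpha L_{\nabla f}/4$, for which it holds with equality, since then $\beta-c=\tfrac{(1+\alpha)^2}{\alpha}L_g^2L_{\nabla f}$, $\gamma-2c=\tfrac{\alpha}{2}L_{\nabla f}$, and $2(\beta-c)(\gamma-2c)=(1+\alpha)^2L_g^2L_{\nabla f}^2=L_g^2(L_{\nabla f}+\gamma)^2$. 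The geometric weights $\Gamma_k=(1-1/\sqrt N)^{k-1}$ generated by the stepsizes \eqref{def_tau} satisfy \eqref{const_cond1} with $\bar c=1$ by a one-line geometric-series estimate. Substituting the elementary sums $\sum_{k=0}^{N-1}\tau_k^2\le 2$ and $\sum_{k=1}^{N-1}\tau_k=(N-1)/\sqrt N\ge\sqrt N-1$ into \eqref{main_rec2}, and into the merit-function bound for $\sum_k\tau_k\|g(x^k)-u^k\|^2$, yields the ${\cal O}(1/\sqrt N)$ rates \eqref{main_rec3} and \eqref{main_rec4} up to the explicit constants.
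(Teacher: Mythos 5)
Your proposal follows essentially the same route as the paper's proof: the same unrolled error representation $\nabla F(x^k)-z^k=\sum_{i<k}\alpha_{i,k}(e_i+\Delta^F_i)$ with the one-step drift $e_i$ (your $P_i$ equals $a\tau_i e_i$), the same summation exchange driven by \eqref{const_cond1}, the same closing via Lemma~\ref{main_convergence} and Proposition~\ref{prop_conv}, and identical verifications in parts (b) and (c), including the choice $\gamma=4c=\alpha L_{\nabla f}$ and $\bar c=1$. The only local deviation is that you bound the error by $2(\text{bias}^2+\text{noise}^2)$ via Jensen plus martingale orthogonality, whereas the paper runs a convexity-based recursive inequality that keeps coefficient one on $\|e_i\|^2$ and isolates zero-conditional-mean cross terms $\delta_i$; your variant is valid but costs a factor of $2$ on the $\max(L_1,L_2)$ terms of \eqref{error_grad} (compensated on the noise side, since your $\Eb[\|\Delta^F_i\|^2\,|\,\Fc_i]\le\sigma_J^2\sigma_s^2$ is tighter than the paper's $2\sigma_J^2\sigma_s^2$), so the stated rates are unaffected.
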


\begin{proof}
We first show part (a). By \eqref{def_zk}, we have
\begin{multline*}
\nabla F(x^{k+1})-z^{k+1} \\
= (1-a\tau_k)[\nabla F(x^k)-z^k+\nabla F(x^{k+1})-\nabla F(x^k)]+a\tau_k[\nabla F(x^{k+1})-[J^{k+1}]^\top s^{k+1}].
\end{multline*}
Dividing both sides of the above inequality by $\Gamma_{k+1}$, summing them up, noting the fact that $\tau_0=1/a$, similar to \eqref{def_zk2}, we obtain
\begin{align} \label{Fk-zk}
&\nabla F(x^k)-z^k = \sum_{i=0}^{k-1} \alpha_{i,k} \left[e_i+ \Delta^F_i \right] \qquad \forall k \ge 1, \nn\\
\intertext{with}
& e_i := \frac{(1-a\tau_i)}{a\tau_i}\big[\nabla F(x^{i+1})-\nabla F(x^i)\big] + \nabla F(x^{i+1})-\nabla g(x^{i+1})^\top \nabla f(u^i),
\end{align}
where $\Delta^F_i$ is defined in \eqref{def_rk}. Hence,
\[
\nabla F(x^{k-1})-z^{k-1} = \sum_{i=0}^{k-2} \alpha_{i,k-1} \left[e_i+ \Delta^F_i \right]=\frac{\Gamma_{k-1}}{\Gamma_k}\sum_{i=0}^{k-2} \alpha_{i,k} \left[e_i+ \Delta^F_i \right],
\]
which together with \eqref{Fk-zk} implies that
\begin{align*}
\|\nabla F(x^k)-z^k\|^2 &= \Big\|\frac{\Gamma_k}{\Gamma_{k-1}}\big[\nabla F(x^{k-1})-z^{k-1}\big]+\alpha_{k-1,k} \left[e_{k-1}+ \Delta^F_{k-1} \right]\Big\|^2 \nn\\
&= \big\|(1-a\tau_{k-1})\big[\nabla F(x^{k-1})-z^{k-1}\big]+a\tau_{k-1} \left[e_{k-1}+ \Delta^F_{k-1} \right] \big\|^2 \nn \\
&=\big\|(1-a\tau_{k-1})\big[\nabla F(x^{k-1})-z^{k-1}\big]+a\tau_{k-1}e_{k-1}\big\|^2+ a^2\tau^2_{k-1} \left\|\Delta^F_{k-1}\right\|^2 \nn \\
&{\quad} + 2a\tau_{k-1}\big\langle (1-a\tau_{k-1})\big[\nabla F(x^{k-1})-z^{k-1}\big]+a\tau_{k-1}e_{k-1}, \Delta^F_{k-1}\big\rangle \nn \\
&\le (1-a\tau_{k-1}) \big\|\nabla F(x^{k-1})-z^{k-1}\big\|^2+a\tau_{k-1} \left\|e_{k-1}\|^2+ a^2\tau^2_{k-1}\|\Delta^F_{k-1}\right\|^2\nn \\
&{\quad} + 2a\tau_{k-1}\big\langle (1-a\tau_{k-1})[\nabla F(x^{k-1})-z^{k-1}]+a\tau_{k-1}e_{k-1}, \Delta^F_{k-1}\big\rangle,
\end{align*}
where the inequality follows from the convexity of $\|\cdot\|^2$. Dividing both sides of the above inequality by $\Gamma_k$, using \eqref{def_Gamma}, summing all the resulting inequalities, and noting the facts that $\tau_0=1/a$, and $\tau_k \le 1/a$, we obtain
\beq \label{Fz-zk2}
\|\nabla F(x^k)-z^k\|^2 \le \Gamma_k \left[\sum_{i=0}^{k-1} \left(\frac{a\tau_i}{\Gamma_{i+1}}\|e_i\|^2+ \frac{a^2\tau^2_i}{\Gamma_{i+1}}\|\Delta^F_i\|^2+
\frac{2a\tau_i \delta_i}{\Gamma_{i+1}}\right)
\right],
\eeq
where
\[
\delta_i:= \langle (1-a\tau_i)[\nabla F(x^i)-z^i]+a\tau_i e_i, \Delta^F_i\rangle.
\]
Now, using \eqref{def_xk}, \eqref{error_F}, with a view to Lemma~\ref{f_lips}, we obtain
\[
\|e_i\|^2 \le \frac{2L_{\nabla F}^2(1-a\tau_i)^2}{a^2}\|d^i\|^2 +  4 L_g^2 L_{\nabla f}^2\Big[\tau_i^2 L_g^2\|d^i\|^2+\|g(x^i)-u^i\|^2\Big],
\]
which together with \eqref{def_L12} implies that
\begin{equation}
\label{bound_ei}
\begin{aligned}
\lefteqn{\sum_{k=1}^{N} \bigg(\tau_k \Gamma_k \sum_{i=0}^{k-1} \frac{a\tau_i}{\Gamma_{i+1}} \|e_i\|^2\bigg) \le
\sum_{k=1}^{N} \bigg(\tau_k \Gamma_k \sum_{i=0}^{k-1} \frac{a \tau_i}{\Gamma_{i+1}} \left[L_1 \|d^i\|^2+L_2\|g(x^i)-u^i\|^2\right]\bigg)}\hspace{5em} \\
& =a \sum_{k=0}^{N-1} \left\{\frac{\tau_k}{\Gamma_{k+1}}\bigg[\sum_{i=k+1}^{N}\tau_i \Gamma_i\bigg]\bigg[L_1 \|d^k\|^2+L_2\|g(x^k)-u^k\|^2\bigg]\right\}\\
&\le a \bar c \sum_{k=0}^{N-1} \left\{\tau_k\left[L_1 \|d^k\|^2+L_2\|g(x^k)-u^k\|^2\right]\right\},
\end{aligned}
\end{equation}
where the last inequality follows from the condition \eqref{const_cond1}. In a similar way,
\beq \label{delta_bnd}
\sum_{k=1}^{N} \bigg(\tau_k \Gamma_k \sum_{i=0}^{k-1} \frac{a^2\tau_i^2}{\Gamma_{i+1}} \|\Delta^F_i\|^2\bigg)
= a^2 \sum_{k=0}^{N-1} \frac{\tau^2_k}{\Gamma_{k+1}}\bigg(\sum_{i=k+1}^{N}\tau_i \Gamma_i\bigg)
\|\Delta^F_k\|^2
\le \bar c a^2 \sum_{k=0}^{N-1} \tau^2_k
\|\Delta^F_k\|^2.
\eeq
Moreover, under Assumption~\ref{stoch_assump} we have
\[
\Eb[\|\Delta^F_k\|^2 | \Fc_k] \le 2\Eb[\big[J^{k+1}\big]^\top s^{k+1}| \Fc_k] \le 2 \sigma^2_J \sigma^2_s, \qquad \Eb[\delta_k | \Fc_k]=0.
\]
Therefore, by taking the conditional expectation of both sides of \eqref{Fz-zk2}, noting \eqref{bound_ei}, \eqref{delta_bnd}, the above inequality, and Lemma~\ref{main_convergence}, we obtain \eqref{error_grad}. Part (b) then follows from \eqref{def_Vopt} and the facts that
\begin{align*}
&{\color{black}c \sum_{k=0}^{N-1} \tau_k \Eb[\|d^k\|^2 | \Fc_k] \le W(x^0,z^0,u^0)+\sigma^2 \sum_{k=0}^{N-1} \tau^2_k},\\
&\Eb \big[V(x^R,z^R)\big]={\frac {\sum_{k=1}^{N-1} \tau_k \Eb \big[V(x^k,z^k)\big]}{\sum_{k=1}^{N-1} \tau_k}},
\end{align*}
due to \eqref{main_rec}, \eqref{sum_rk}, and \eqref{def_probl}.

To show part (c), observe that condition \eqref{const_cond} is  satisfied by \eqref{const_cond2} and the choice of $\gamma=4c= \alpha L_{\nabla f}$. Also by \eqref{def_Gamma} and \eqref{def_tau}, we have
\begin{align*}
&{\sum_{k=1}^{N-1} \tau_k \ge \sqrt{N}-1, \qquad \sum_{k=0}^{N-1} \tau_k^2 \le 2}, \qquad \Gamma_k = \Big(1-\frac{1}{\sqrt{N}}\Big)^{k-1},\\
&\sum_{i=k+1}^{N} \tau_i \Gamma_i = \Big(1-\frac{1}{\sqrt{N}}\Big)^k \frac{1}{\sqrt{N}} \sum_{i=0}^{N-k-1} \Big(1-\frac{1}{\sqrt{N}}\Big)^i \le \Big(1-\frac{1}{\sqrt{N}}\Big)^k,
\end{align*}
which ensures \eqref{const_cond1} with $\bar c=1$ and hence, together with \eqref{main_rec2}, implies \eqref{main_rec3}.
\end{proof}

\vgap

We now add a few remarks about the above result.
First, the estimate \eqref{main_rec3} implies that to find an approximate stationary point $(\bar x,\bar z)$ of problem \eqref{main_prob} satisfying
$\Eb[V(\bar x,\bar z)] \leq\epsilon$, Algorithm~\ref{alg_NASA} requires at most ${\cal O}(1/\epsilon^2)$  iterations (stochastic gradients), which
is better than ${\cal O}(1/\epsilon^{2.25})$ obtained in \cite{WaLiFa17} for unconstrained nonconvex stochastic optimization. Our complexity bound indeed matches the sample complexity of the stochastic gradient method for the general (single-level) smooth nonconvex optimization problem \cite{GhaLan12}.
It is also consistent with the Central Limit Theorem for composite risk functionals~\cite{dentcheva2017statistical}, because the objective value gap is essentially proportional to the squared gradient norm at an approximate stationary point (in the unconstrained case).
Secondly, Theorem~\ref{cvrg_rate_nocvx} provides the same convergence rate of Algorithm~\ref{alg_NASA} for both constrained and unconstrained problems: we can still get sample complexity of ${\cal O}(1/\epsilon^2)$ with taking only one sample per iteration in the constrained case. To the best of our knowledge, this is the first direct convergence analysis for constrained nonconvex stochastic optimization providing the aforementioned sample complexity.
{Thirdly, \eqref{main_rec4} provides not only accuracy bounds for the approximate stationary point $x^R$ but also squared error bounds for estimating the exact value of the inner function, $g(x^R)$, by the second running average, $u^R$. As a result, Algorithm~\ref{alg_NASA} provides not only accurate approximations to the stationary solution but also reliable estimates of the gradients. {\color{black}Finally, note that Assumption~\ref{stoch_assump} implies that derivatives of $f$ and $g$ are bounded. Hence, to establish the results of Theorem~\ref{cvrg_rate_nocvx}, we can relax Assumption~\ref{f-g-assum} and require Assumption~\ref{stoch_assump} together with Lipschitz continuity of derivatives of $f$ and $g$.}

For a stochastic variational inequality problem of Example \ref{e:SVI}, our method has significantly lower complexity and faster
convergence than the approach of \cite{iusem2017extragradient}. \color{black}Most of literature on stochastic methods for SVI requires monotonicity or even strong monotonicity of the operator $H(\cdot)$ (see, \emph{e.g.},
\cite{juditsky2011solving,koshal2013regularized} and the references within). The authors in \cite{iusem2017extragradient} consider SVI with operators $\Eb[H(\cdot)]$ satisfying a weaker pseudo-monotonicity assumption. With the use of a variance reduction technique by increasing sizes of mini-batches, a generalization of the extragradient method originating in \cite{korpelevich1976extragradient} was developed, with oracle complexity
$\mathcal{O}(\varepsilon^{-2})$, which is matched by our results. The recent manuscript \cite{lin2018solving}, considers a special case of SVI, a stochastic saddle point problem, with weakly convex-concave functions. By employing a proximal point scheme, the resulting SVI is converted to a monotone one, and approximately solved by a stochastic algorithm. The resulting two-level scheme has rate of convergence $\mathcal{O}(\varepsilon^{-2})$, if acceleration techniques are used. Our approach does not make any monotonicity assumption and achieves oracle complexity of  $\mathcal{O}(\varepsilon^{-2})$ as well. This is due to the use of a one-level method for a special merit function \eqref{def_merit}, involving the lifted gap function \eqref{gad_function} as one of its components (the other being a projection mapping using its gradient).}

We also have the following asymptotic convergence result.

\begin{theorem}
\label{t:convergence}
Assume that the sequence of stepsizes satisfies
\beq\label{general_cond}
\sum_{k=1}^\infty \tau_k = + \infty \quad {\color{black} \text{a.s.}, \qquad \Eb} \sum_{k=1}^\infty \tau_k^2  < \infty.
\eeq
Then a constant $\bar{a}>0$ exists  such that, for all $a\in (0,\bar{a})$,  with probability 1, every accumulation point $(x^*,z^*,u^*)$ of the sequence $\{x^k,z^k,u^k)$ generated by Algorithm \ref{alg_NASA} satisfies the conditions:
\begin{gather*}
z^* = \big[\nabla g(x^*)\big]^T \nabla f(x^*),\\
u^* = g(x^*),\\
-z^* \in {\cal N}_X(x^*).
\end{gather*}
\end{theorem}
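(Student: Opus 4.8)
The plan is to treat Algorithm~\ref{alg_NASA} as a stochastic approximation recursion and to use the merit function $W(x,z,u)$ of \eqref{def_merit} as a Lyapunov function, in the spirit of the Robbins--Siegmund almost-supermartingale convergence theorem. First I would rewrite the one-step estimate established inside the proof of Lemma~\ref{main_convergence} as the conditional inequality
\[
\Eb[W_{k+1}\mid\Fc_k]\le W_k-c\tau_k\big(\|d^k\|^2+\|g(x^k)-u^k\|^2\big)+\Eb[r^{k+1}\mid\Fc_k],
\]
where $W_k:=W(x^k,z^k,u^k)\ge0$ by \eqref{opt_QP} and \eqref{def_merit}. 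The bound $\sum_k\Eb[r^{k+1}\mid\Fc_k]\le\sigma^2\sum_k\tau_k^2$ from Proposition~\ref{prop_conv}(b) (whose bookkeeping inequalities, e.g. $a\tau_k\le1/\sqrt2$ and the descent condition \eqref{const_cond}, hold along the whole trajectory precisely for $a<\bar a$), together with $\sum_k\tau_k^2<\infty$ a.s. (Tonelli applied to \eqref{general_cond}), gives $\sum_k\Eb[r^{k+1}\mid\Fc_k]<\infty$ a.s. Robbins--Siegmund then yields two conclusions: $W_k$ converges a.s. to a finite limit, and $\sum_k\tau_k(\|d^k\|^2+\|g(x^k)-u^k\|^2)<\infty$ a.s. Since $\sum_k\tau_k=\infty$ a.s., this forces $\liminf_k\|d^k\|=0$ and $\liminf_k\|g(x^k)-u^k\|=0$.

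Next I would record the ingredients needed to pass from ``$\liminf=0$'' to full convergence. Jensen's inequality applied to Assumption~\ref{stoch_assump} shows $\|\nabla g(x^{k+1})\|\le\sigma_J$ and $\|\nabla f(u^k)\|\le\sigma_s$, so $\nabla F$ and the drift of \eqref{def_zk} are uniformly bounded, and an almost-supermartingale argument for $\|z^k\|^2$ gives $\sup_k\|z^k\|<\infty$ a.s. Writing $z^{k+1}-z^k=\tau_k h^k+\nu^k$ with bounded drift $h^k:=a(\nabla g(x^{k+1})^\top\nabla f(u^k)-z^k)$ and martingale increment $\nu^k$ satisfying $\Eb[\|\nu^k\|^2\mid\Fc_k]\le a^2\sigma_J^2\sigma_s^2\tau_k^2$, square-summability of $\{\tau_k\}$ makes $S_k:=\sum_{i<k}\nu^i$ an $L^2$-bounded martingale, so $S_k$ converges a.s.; in particular the cumulative noise displacement of $z$ over any tail block is uniformly small. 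Combined with $\tau_k\|d^k\|\to0$ (hence $\|x^{k+1}-x^k\|\to0$), averaging the recursion \eqref{def_uk} then yields $\|g(x^k)-u^k\|\to0$ a.s.: the bias part is controlled by the slow motion of $x$ and the noise part vanishes by the martingale argument.

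The crux, and the step I expect to be the main obstacle, is upgrading $\liminf_k\|d^k\|=0$ to $\|d^k\|\to0$ a.s., i.e. ruling out oscillation of the primal residual $d^k=\bar y(x^k,z^k,\beta)-x^k$ in the presence of merely square-integrable stochastic gradients. I would argue by contradiction: if $\limsup_k\|d^k\|=2\delta>0$ on an event of positive probability, then on that event $\|d^k\|$ crosses from below $\delta$ to above $3\delta/2$ infinitely often. Using Lipschitz continuity of $(x,z)\mapsto\bar y(x,z,\beta)-x$ (nonexpansiveness of $\Pi_X$) and the decomposition $\|x^{k+1}-x^k\|=\tau_k\|d^k\|$, $z^{k+1}-z^k=\tau_k h^k+\nu^k$, each up-crossing interval $(m_j,n_j]$ obeys
\[
\tfrac{\delta}{2}\le\|d^{n_j}\|-\|d^{m_j}\|\le L_{\bar y}\Big((\sup_k\|d^k\|+\sup_k\|h^k\|)\sum_{(m_j,n_j]}\tau_k+\|S_{n_j}-S_{m_j}\|\Big).
\]
Taking $j$ large enough that the a.s.\ convergent tail of $S_k$ contributes at most $\delta/(4L_{\bar y})$, I obtain $\sum_{(m_j,n_j]}\tau_k\ge\rho$ for a constant $\rho>0$ independent of $j$, while $\|d^k\|\ge\delta$ throughout $(m_j,n_j]$ by the choice of $m_j$ as the last time the residual was below $\delta$. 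Hence each up-crossing contributes at least $\delta^2\rho$ to $\sum_k\tau_k\|d^k\|^2$, and infinitely many disjoint up-crossings contradict $\sum_k\tau_k\|d^k\|^2<\infty$. Therefore $\|d^k\|\to0$ a.s.

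Finally I would close the loop. With $\|d^k\|\to0$ and $\|g(x^k)-u^k\|\to0$, the error terms $e_i$ in \eqref{Fk-zk} tend to zero, so averaging \eqref{Fk-zk} (the deterministic part by a Toeplitz/averaging argument, the martingale part $\Delta^F_i$ by the same martingale estimate as above) gives $\|z^k-\nabla F(x^k)\|\to0$ a.s. For any accumulation point $(x^*,z^*,u^*)=\lim_j(x^{k_j},z^{k_j},u^{k_j})$, continuity of $g$, of $\nabla F(\cdot)=\nabla g(\cdot)^\top\nabla f(g(\cdot))$, and of $(x,z)\mapsto\bar y(x,z,\beta)-x$ then yields $u^*=g(x^*)$, $z^*=\nabla F(x^*)=\nabla g(x^*)^\top\nabla f(g(x^*))$, and $\bar y(x^*,z^*,\beta)=x^*$. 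Since $\bar y(x^*,z^*,\beta)=x^*$ is equivalent to $-z^*\in\mathcal{N}_X(x^*)$ (as noted after \eqref{QP}), all three asymptotic conditions follow, which is the assertion.
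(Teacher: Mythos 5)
Your proposal is correct and takes essentially the same route as the paper: the paper likewise deduces $\liminf_k\|d^k\|=\liminf_k\|g(x^k)-u^k\|=0$ a.s.\ from the merit-function recursion \eqref{main_rec} together with a.s.\ summability of the $r^{k+1}$ (your Robbins--Siegmund step), then upgrades to full convergence by the oscillation technique of \cite[Lem.~8]{ruszczynski87} --- precisely the up-crossing argument you reconstruct --- and finally characterizes accumulation points by the same averaging/continuity reasoning. One small repair: your claim $\sup_k\|z^k\|<\infty$ a.s.\ is more safely obtained not from a supermartingale argument on $\|z^k\|^2$ (whose increments are not summable) but from your own decomposition, since $z^k-S_k$ obeys a deterministic convex-combination recursion with inputs bounded by $L_fL_g+\sup_k\|S_k\|$ and the martingale $S_k$ converges a.s.
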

\begin{proof}
Note that the sequence $\{r^k\}$ defined in \eqref{def_rk} is adapted to $\{\Fc_k\}$ and summable almost surely under Assumption~\ref{stoch_assump} and \eqref{general_cond}. Therefore, \eqref{main_rec} implies that almost surely
\[
\lim_{k \to \infty} \inf \|d^k\|=0, \qquad \lim_{k \to \infty} \inf \|g(x^k)-u^k\|=0.
\]
Using the techniques of \cite[Lem. 8]{ruszczynski87}, we can then show that with probability~1, $d^k\to 0$ and $g(x^k)-u^k \to 0$. Following similar analysis to \cite{ruszczynski87}, we prove that each convergent subsequence of $\{(x^k,z^k,u^k)\}$ converges to a stationary point of problem \eqref{main_prob}, the corresponding gradient of $F$, and the value of $g$.
\end{proof}

\section{Dual averaging with constraints}

 Although our main interest is in composite optimization, we also
 provide a simplified variant of Algorithm~\ref{alg_NASA} for solving a single-level stochastic optimization. Our techniques allow the same convergence analysis for both constrained and uncostrained problems which removes the necessity of forming mini-batches of samples per iteration for constrained problems (see e.g., \cite{GhaLanZhang16, GhaLan16}). Moreover, this variant of the NASA method, different from the existing SA-type methods, is a parameter-free algorithm in the sense that its stepsize policy does not depend on any problem parameters {\color{black} and allows for random (history dependent) stepsizes}. This algorithmic feature is more important under the stochastic setting since estimating problem parameters becomes more difficult.

Throughout this section, we assume that the inner function $g$ in \eqref{main_prob} is the identity map, \emph{i.e.}, $g(x)\equiv x$, and only noisy information about $f$ is available. In this case, our problem of interest is reduced to
\begin{equation}
\label{main_prob_new}
\min_{x\in X} f(x),
\end{equation}
and it is easy to verify that
\beq \label{g_param}
G=g, \qquad J=I, \qquad \sigma_G=0, \qquad \sigma_J =1, \qquad L_g=1, \qquad L_{\nabla g}=0.
\eeq
Moreover, Algorithm~\ref{alg_NASA} can be simplified as follows.

\begin{algorithm} [H]
	\caption{The Averaged Stochastic Approximation (ASA) Method}
	\label{alg_ASA}
	\begin{algorithmic}

\STATE  Replace Step 2 of Algorithm~\ref{alg_NASA} with the following:\\

2'. Call the stochastic oracle to obtain $s^{k+1}$ at $x^k$ and update the ``running average'' as
\beq \label{def_zk_new}
z^{k+1} = (1-a\tau_k)z^k + a\tau_k s^{k+1}.
\eeq

	\end{algorithmic}
\end{algorithm}

\vgap

The above algorithm differs from Algorithm~\ref{alg_NASA} in two aspects. First, stochastic approximation of $\nabla g$ is replaced by its exact value, the identity matrix. Secondly, the averaged sequence in \eqref{def_uk} is not required and $u^{k}$ is simply set to $x^{k}$ due to the fact that exact function values of $g$ are available.

 The resulting method belongs to the class of algorithms with direction averaging (multi-step) methods. The literature on these methods
 for unconstrained problems is very rich. They were initiated in \cite{tsypkin1970fundamentals}, developed and analyzed in
 \cite{gupal1972stochastic,korostelev1981multi,polyak1977comparison} and other works. Recently, these methods play a role in
 machine learning, under the name of \emph{dual averaging methods}
 (see \cite{xiao2010dual} and the references therein).

 In all these versions, two time-scales were essential for
 convergence. The first single time-scale method was proposed in \cite{ruszczynski1983stochastic}, with convergence analysis based on a different
 Lyapunov function suitable for unconstrained problems. Our version is related to \cite{ruszczynski87}, where a similar approach was proposed,
 albeit without rate of convergence estimates.  {\color{black}We may remark here that the version of \cite{ruszczynski87} calculates
 $s^{k+1}$ at $x^{k+1}$ rather than at $x^k$ at Step 2', which is essential for nonsmooth weakly convex $f(\cdot)$.
 For smooth functions both ways are possible and can be analyzed in the same way with minor adjustments.}

Convergence analysis of Algorithm~\ref{alg_ASA} follows directly from that of Algorithm~\ref{alg_NASA}
by simplifying the definition of the merit function as
\beq \label{def_merit_new}
W(x,z) = a (f(x)-f^*) - \eta(x,z),
\eeq
exactly as used in \cite{ruszczynski87}.
We then have the following result.
\begin{lemma} \label{main_convergence_new}
Let $\{x^k,z^k,y^k\}_{k \ge 0}$ be the sequence generated by Algorithm~\ref{alg_ASA}. Also assume that function $f$ has Lipschitz continuous gradient and $\beta_k = \beta>0 \ \ \forall k \ge 0$. Then:\\
{\rm \textbf{(a)}} For any $N \ge 2$, we have
\beq \label{main_rec_new}
\beta \sum_{k=0}^{N-1} \tau_k \|d^k\|^2  \le W(x^0,z^0)+\sum_{k=0}^{N-1} r^{k+1},
\eeq
where, for any $k \ge 0$,
\begin{equation}
\label{def_rk_new}
\begin{aligned}
 &d^k = y^k-x^k, \qquad \Delta^f_k =  \nabla f(x^k)-s^{k+1}, \\
&r^{k+1}= \frac{1}{2}(3aL_{\nabla f}+ L_{\nabla \eta}) \tau_k^2\|d^k\|^2+a\tau_k\langle d^k,\Delta^f_k\rangle+\frac{1}{2}L_{\nabla \eta} \|z^{k+1}-z^k\|^2.
\end{aligned}
\end{equation}
{\rm\textbf{(b)}} If, in addition, Assumption~\ref{stoch_assump} holds along with \eqref{g_param}, with the choice of
{\color{black}$\beta_k =\beta>0$ for all $k \ge 0$, and $a \tau_k \le 1/\sqrt{2}$  for all $k \ge 1$}, and $\tau_0 =1/a$, we have
\begin{equation}
\begin{aligned}
&\beta^2 \Eb[\|d^k\|^2 | \Fc_{k-1}]] \le \Eb[\|z^k\|^2 | \Fc_{k-1}]] \le \sigma^2_s \qquad \forall k \ge 1,  \\
&\sum_{k=0}^\infty \Eb[\|z^{k+1}-z^k\|^2 | \Fc_k] \le 2{\color{black}\left[\|z^0\|^2+24 a^2 \sigma^2_s \sum_{k=0}^\infty \tau_k^2 \right]},  \\
&\sum_{k=0}^\infty \Eb[r^{k+1} | \Fc_k] \le \left(\frac{(3aL_{\nabla f}+ L_{\nabla \eta})\sigma^2_s }{2\beta^2}+ 2 L_{\nabla \eta}{\color{black}\left[\|z^0\|^2+24 a^2 \sigma^2_s \right]}\right) \sum_{k=0}^\infty \tau_k^2:=\sigma^2 \sum_{k=0}^\infty \tau_k^2.
\end{aligned}
\label{rk_bnd_new}
\end{equation}
\end{lemma}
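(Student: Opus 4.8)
The plan is to obtain both parts as direct specializations of Lemma~\ref{main_convergence} and Proposition~\ref{prop_conv} under the identity-map substitutions \eqref{g_param}, namely $G=g$, $J=I$, $\sigma_G=0$, $\sigma_J=1$, $L_g=1$, $L_{\nabla g}=0$, together with the observation that $u^k\equiv x^k$ forces $g(x^k)-u^k\equiv 0$. Consequently the $\tfrac{\gamma}{2}\|g(x)-u\|^2$ term of the merit function \eqref{def_merit} disappears and $W$ collapses to \eqref{def_merit_new}; in particular there is no $\gamma$, no $b$, and no $u$-tracking estimate to carry along.

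For part (a) I would reproduce the one-step decrease of Lemma~\ref{main_convergence} keeping only its first two ingredients. Since $F=f$ and $L_{\nabla F}=L_{\nabla f}$ (because $L_g=1$, $L_{\nabla g}=0$ in Lemma~\ref{f_lips}), the descent estimate \eqref{f_bnd} gives $a[f(x^{k+1})-f(x^k)]\le a\tau_k\langle\nabla f(x^{k+1}),d^k\rangle+\tfrac{aL_{\nabla f}}{2}\tau_k^2\|d^k\|^2$. For the $-\eta$ term I would repeat \eqref{eta_bnd1} with $[J^{k+1}]^\top s^{k+1}$ replaced by $s^{k+1}$ (as $J=I$) and with $z^{k+1}-z^k=a\tau_k(s^{k+1}-z^k)$ from \eqref{def_zk_new}; the coefficient of $\langle z^k,d^k\rangle$ is $(1+a)\tau_k$, so invoking the subproblem optimality \eqref{opt_QP} in the form $\langle z^k,d^k\rangle\le-\beta\|d^k\|^2$ converts $(1+a)\tau_k\langle z^k,d^k\rangle+\beta\tau_k\|d^k\|^2$ into the strictly negative term $-a\beta\tau_k\|d^k\|^2$. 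Splitting $-a\tau_k\langle d^k,s^{k+1}\rangle=-a\tau_k\langle d^k,\nabla f(x^k)\rangle+a\tau_k\langle d^k,\Delta^f_k\rangle$ and merging the two gradient inner products through $\langle\nabla f(x^{k+1})-\nabla f(x^k),d^k\rangle\le L_{\nabla f}\tau_k\|d^k\|^2$, all the $\tau_k^2\|d^k\|^2$ contributions collect into $\tfrac12(3aL_{\nabla f}+L_{\nabla\eta})$, reproducing exactly the $r^{k+1}$ of \eqref{def_rk_new}. This yields the per-iteration inequality $W(x^{k+1},z^{k+1})-W(x^k,z^k)\le-a\beta\tau_k\|d^k\|^2+r^{k+1}$; telescoping over $k=0,\dots,N-1$ and using $W\ge0$ (which holds because $\eta\le0$ by \eqref{opt_QP}) gives \eqref{main_rec_new}.

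For part (b) I would mirror the three estimates of Proposition~\ref{prop_conv}. The bound $\beta^2\|d^k\|^2\le\|z^k\|^2$ is again immediate from \eqref{opt_QP} and Cauchy--Schwarz. Setting $\tau_0=1/a$ and unrolling \eqref{def_zk_new} exactly as in \eqref{def_zk2} writes $z^k=\sum_{i=0}^{k-1}\alpha_{i,k}s^{i+1}$ as a convex combination (no separate Jacobian factor now appears, so the bound is $\sigma_s^2$ rather than $\sigma_J^2\sigma_s^2$), and convexity of $\|\cdot\|^2$ with $\Eb[\|s^{i+1}\|^2\mid\Fc_i]\le\sigma_s^2$ gives the first line of \eqref{rk_bnd_new}. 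The increment bound follows from $z^{k+1}-z^k=\tfrac{a\tau_k}{1-a\tau_k}(s^{k+1}-z^{k+1})$ together with $(1-a\tau_k)^{-2}\le12$ (from $a\tau_k\le1/\sqrt2$), handling the first step $z^1=s^1$ separately. For the last line I would take $\Eb[\,\cdot\mid\Fc_k]$ of \eqref{def_rk_new}: the cross term vanishes because $d^k$ is $\Fc_k$-measurable while $\Eb[\Delta^f_k\mid\Fc_k]=\nabla f(x^k)-\Eb[s^{k+1}\mid\Fc_k]=0$ (here $u^k=x^k$ makes $\nabla f(u^k)=\nabla f(x^k)$), the $\|d^k\|^2$ term is controlled by $\Eb[\|d^k\|^2\mid\Fc_{k-1}]\le\sigma_s^2/\beta^2$, and the $\|z^{k+1}-z^k\|^2$ term by the second estimate; collecting constants gives the stated $\sigma^2$.

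The steps are individually routine; the one place that needs care is the constant bookkeeping in part (b). Two standalone quantities (the $\|z^0\|^2$ term and the first-step $\sigma_s^2$) must be absorbed into the coefficient of $\sum_k\tau_k^2$, which is legitimate only because $\tau_0=1/a$ forces $\sum_k\tau_k^2\ge\tau_0^2=1/a^2$; tracking this absorption (and the deliberately loose numerical constants) is the main thing to verify, the remainder being a transcription of the general proof with $L_g=1$, $L_{\nabla g}=0$, $\sigma_J=1$, $\sigma_G=0$ and the $u$-tracking term deleted. The only other subtlety worth flagging is the $(1+a)$-versus-optimality cancellation in part (a), which is what produces the correct negative coefficient in front of $\tau_k\|d^k\|^2$.
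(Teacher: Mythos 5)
Your proposal is correct and follows essentially the same route as the paper: the paper likewise obtains the one-step decrease $W(x^{k+1},z^{k+1})-W(x^k,z^k)\le -a\beta_k\tau_k\|d^k\|^2+r^{k+1}$ by multiplying \eqref{f_bnd} by $a$, adding the specialization of \eqref{eta_bnd2} under \eqref{g_param} (so that $\nabla g=I$, $u^k=x^k$, and the $\gamma$-term of the merit function vanishes), using $a\tau_k\langle d^k,\nabla f(x^{k+1})-\nabla f(x^k)\rangle\le aL_{\nabla f}\tau_k^2\|d^k\|^2$, and then telescoping, while part (b) is exactly Proposition~\ref{prop_conv} repeated with $\sigma_J=1$, $\sigma_G=0$. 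Your constant bookkeeping (the $(1+a)$ cancellation via \eqref{opt_QP} producing $-a\beta\tau_k\|d^k\|^2$, the factor $12$ from $a\tau_k\le 1/\sqrt{2}$, and the absorption of $\|z^0\|^2$ and the first-step variance into the coefficient of $\sum_k\tau_k^2$ via $\tau_0=1/a$) matches the paper's, which indeed records the per-step coefficient as $a\beta_k$ in \eqref{eq_param_free} before passing to \eqref{main_rec_new}.
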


\begin{proof}
{\color{black}Multiplying \eqref{f_bnd} by $a$, summing it up with \eqref{eta_bnd2}, noting \eqref{g_param}, \eqref{def_merit_new}, and the fact that
\[
a\tau_k\langle d^k,\nabla f(x^{k+1})-\nabla f(x^k)\rangle \le a L_{\nabla f} \tau_k^2 \|d^k\|^2,
\]
we obtain
\beq\label{eq_param_free}
W(x^{k+1},z^{k+1})-W(x^{k},z^{k}) \le -a \beta_k \tau_k  \|d^k\|^2+ r^{k+1}.
\eeq
The remainder of the proof is similar to that of Lemma~\ref{main_convergence_new} and Proposition~\ref{prop_conv}; hence, we skip the details.
}
\end{proof}

Using the above results, we can provide the main convergence property of Algorithm~\ref{alg_ASA}.

\begin{theorem} \label{t:main_convergence_new}
Let $\{x^k,z^k,y^k\}_{k \ge 0}$ be the sequence generated by Algorithm~\ref{alg_ASA}, $\beta_k = \beta>0 \ \ \forall k \ge 0$, and $a=1$. Moreover, assume that Assumption~\ref{stoch_assump} holds along with \eqref{g_param}, and the stepsizes are set to \eqref{def_tau}. Then we have
\beq \label{main_rec3_new}
\Eb \big[V(x^R,z^R)\big]  \le \frac{1}{\sqrt{N}-1} \left(\frac{1}{\beta}(\max(1,\beta^2)+L^2_{\nabla f})\big[W(x^0,z^0)+2\sigma^2\big]+ 4\sigma^2_s\right),
\eeq
where $V(x,z)$ is defined in \eqref{Vx}.
\end{theorem}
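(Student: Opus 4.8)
The plan is to specialize the argument of Theorem~\ref{cvrg_rate_nocvx}(c) to the single-level setting via the parameter values \eqref{g_param}. Since $g$ is the identity we have $F=f$, and Lemma~\ref{f_lips} with $L_g=1$, $L_{\nabla g}=0$ gives $L_{\nabla F}=L_{\nabla f}$. I would begin from the optimality-measure bound \eqref{def_Vopt}, which here reads $V(x^k,z^k)\le \max(1,\beta^2)\,\|d^k\|^2+\|z^k-\nabla f(x^k)\|^2$, weight it by $\tau_k$, sum over $k=1,\dots,N-1$, and take expectations; by the law \eqref{def_probl} of $R$ this gives $\Eb[V(x^R,z^R)]=\big(\sum_{k=1}^{N-1}\tau_k\big)^{-1}\sum_{k=1}^{N-1}\tau_k\,\Eb[V(x^k,z^k)]$. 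It then suffices to bound the two weighted sums $\sum\tau_k\Eb[\|d^k\|^2]$ and $\sum\tau_k\Eb[\|z^k-\nabla f(x^k)\|^2]$.

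The first sum is immediate from Lemma~\ref{main_convergence_new}: taking expectations in \eqref{main_rec_new}, then using the last bound of \eqref{rk_bnd_new} together with $\sum_{k=0}^{N-1}\tau_k^2\le 2$ (which follows from \eqref{def_tau}), yields $\sum_{k=0}^{N-1}\tau_k\Eb[\|d^k\|^2]\le \tfrac{1}{\beta}\big(W(x^0,z^0)+2\sigma^2\big)$.

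The main work is the gradient-tracking sum, which I would handle exactly as in \eqref{Fk-zk}--\eqref{Fz-zk2}. The key simplification is that, because $g$ is the identity and $u^i=x^i$, the inner-filter contribution $\nabla F(x^{i+1})-\nabla g(x^{i+1})^\top\nabla f(u^i)$ appearing in the nested case vanishes, so the error term collapses to $e_i=\tfrac{1}{a\tau_i}\big(\nabla f(x^{i+1})-\nabla f(x^i)\big)$ and hence $\|e_i\|^2\le \tfrac{L_{\nabla f}^2}{a^2}\|d^i\|^2$, with no $\|g(x^i)-u^i\|^2$ term. Thus the pair $L_1,L_2$ of \eqref{def_L12} is replaced by the single constant $L_{\nabla f}^2$ (recall $a=1$). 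Carrying the same summation-by-parts through \eqref{Fz-zk2}, using that \eqref{def_tau} satisfies \eqref{const_cond1} with $\bar c=1$ (verified in the proof of Theorem~\ref{cvrg_rate_nocvx}(c)), and invoking $\Eb[\|\Delta^f_k\|^2\mid\Fc_k]\le 2\sigma_s^2$ together with $\Eb[\delta_k\mid\Fc_k]=0$, I obtain $\sum_{k=1}^{N-1}\tau_k\Eb[\|z^k-\nabla f(x^k)\|^2]\le L_{\nabla f}^2\sum_{k=0}^{N-1}\tau_k\Eb[\|d^k\|^2]+4\sigma_s^2$.

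Finally I would combine the three displays. Substituting the $\|d^k\|^2$-bound into both pieces gives $\sum_{k=1}^{N-1}\tau_k\Eb[V(x^k,z^k)]\le \tfrac{1}{\beta}\big(\max(1,\beta^2)+L_{\nabla f}^2\big)\big(W(x^0,z^0)+2\sigma^2\big)+4\sigma_s^2$, and dividing by $\sum_{k=1}^{N-1}\tau_k\ge \sqrt{N}-1$ produces \eqref{main_rec3_new}. The only genuine obstacle is this gradient-tracking step: one must reproduce the recursion \eqref{Fz-zk2}, confirm that every cross term involving $\Delta^f_k$ or $\delta_k$ is a martingale difference that drops under expectation, and check that the constants collapse as claimed; the rest is bookkeeping with the stepsize identities already established for \eqref{def_tau}.
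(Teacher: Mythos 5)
Your proposal is correct and takes essentially the same route as the paper's (very terse) proof, which likewise defines $e_i=\big(\nabla f(x^{i+1})-\nabla f(x^i)\big)/(a\tau_i)$, bounds $\|e_i\|^2\le L^2_{\nabla f}\|d^i\|^2/a^2$, and then invokes Lemma~\ref{main_convergence_new} and repeats the argument of \eqref{Fk-zk}--\eqref{Fz-zk2} in Theorem~\ref{cvrg_rate_nocvx}. Your write-up merely fills in the bookkeeping the paper leaves implicit (the $\bar c=1$ stepsize check, $\sum_{k=0}^{N-1}\tau_k^2\le 2$, $\Eb[\|\Delta^f_k\|^2\,|\,\Fc_k]\le 2\sigma_s^2$, and the vanishing martingale cross terms), and the constants you track reproduce \eqref{main_rec3_new} exactly.
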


\begin{proof}
Similar to \eqref{Fk-zk}, we have
\beq \label{proof_new1}
\nabla F(x^k)-z^k = \sum_{i=0}^{k-1} \alpha_{i,k} \left[e_i+ \Delta^f_i \right], \quad
 e_i := \frac{\nabla f(x^{i+1})-\nabla f(x^i)}{a\tau_i},
\eeq
which together with the Lipschitz continuity of $\nabla f$ and \eqref{def_xk} imply that
\beq \label{proof_new2}
\|e_i\|^2 \le \frac{L^2_{\nabla f}\|d^i\|^2}{a^2}.
\eeq
In view of Lemma~\ref{main_convergence_new}, the rest of the proof is similar to that of Theorem~\ref{cvrg_rate_nocvx}.
\end{proof}

\vgap

It is worth noting that unlike Algorithm~\ref{alg_NASA}, the regularization coefficient $\beta_k$ in Algorithm~\ref{alg_ASA}, {\color{black}due to \eqref{eq_param_free}}, can be set to any positive constant number to achieve the sample (iteration) complexity of ${\cal O}(1/\epsilon^2)$.  Such a result has not been obtained before for a parameter-free algorithm for smooth nonconvex stochastic optimization. Moreover, Algorithm~\ref{alg_ASA}, similar to Algorithm~\ref{alg_NASA}, outputs a pair of $(x^R,z^R)$ where $z^R$ is an accurate estimate of
$\nabla f(x^R)$ without taking any additional samples. This is important for both unconstrained and constrained problems, where one can use the quantity
$\max(1,\beta_k)\|y^k - x^k\|$ as an online certificate of the quality of the current solution; see Lemma \ref{l:opt-bound}.

Note that the convergence result of Theorem~\ref{t:main_convergence_new} is established under the boundedness assumption of the second moment of the stochastic gradient. In the remainder of this section, we modify the convergence analysis of Algorithm~\ref{alg_ASA} under a relaxed assumption that only variance of the stochastic gradient is bounded. This assumption, which is common  in the literature on smooth stochastic optimization, is stated as follows.
\begin{assumption}\label{var_stoch_assump}
For each $k$, the stochastic oracle delivers a random vector $s^{k+1} \in \Rb^n$ such that
\[
\Eb[s^{k+1}|\Fc_k] = \nabla f(x^k), \qquad \Eb[\|s^{k+1}-\nabla f(x^k)\|^2 | \Fc_k] \le \hat \sigma^2_s.
\]
\end{assumption}

\begin{lemma} \label{main_convergence_new2}
Let $\{x^k,z^k,y^k\}_{k \ge 0}$ be the sequence generated by Algorithm~\ref{alg_ASA}. Also assume that the function $f$ has a Lipschitz continuous gradient
and $\beta_k = \beta>0$, for all $k \ge 0$. Then:\\
{\rm \textbf{(a)}} For any $N \ge 2$, we have
\begin{equation} \label{main_rec_new2}
\sum_{k=1}^{N-1} \tau_k \left(\beta_k-\tfrac{(3aL_{\nabla f}+ L_{\nabla \eta}) \tau_k}{2} \right)\|d^k\|^2  \le W(x^0,z^0)+\sum_{k=0}^{N-1} \hat r^{k+1},
\end{equation}
where, for any $k \ge 0$,
\begin{equation}\label{def_rk_new2}
\hat r^{k+1}= \big\langle a\tau_k d^k-a^2 \tau^2_k L_{\nabla \eta}(\nabla F(x^k)-z^k) ,\Delta^f_k\big\rangle
+ \frac{1}{2}a^2 \tau^2_k L_{\nabla \eta} \left[\|\nabla F(x^k)-z^k\|^2 + \|\Delta^f_k\|^2\right].
\end{equation}
{\rm\textbf{(b)}} If, in addition, Assumption~\ref{var_stoch_assump} holds and stepsizes $\{\tau_k\}$ are chosen such that
\beq\label{const_cond1_new}
\tau_0 =1/a, \qquad \sum_{i=k+1}^{N} \tau^2_i \Gamma_i \le \hat c \tau_k \Gamma_{k+1} \qquad \forall k \ge 0 \ \ \text{and} \ \ \forall N \ge 2,
\eeq
where $\Gamma_k$ is defined in \eqref{def_Gamma}, and $\hat c$ is a positive constant, we have
\begin{multline}
\label{dk_expect_new}
\sum_{k=0}^{N-1} \tau_k \left(\beta_k-\frac{1}{2}(3aL_{\nabla f}+ L_{\nabla \eta}+a \hat c L_{\nabla \eta} L^2_{\nabla f}) \tau_k \right)\Eb[\|d^k\|^2 | \Fc_k] \\
 \le W(x^0,z^0)+\frac{L_{\nabla \eta}}{2}\|\nabla F(x^0)-z^0\|^2+\frac{1}{2}a^2(L_{\nabla \eta}+2 \hat c)\hat \sigma^2_s \sum_{k=0}^{N-1} \tau_k^2.
\end{multline}
\end{lemma}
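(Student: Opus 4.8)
The plan is to reduce everything to the one-step descent estimate \eqref{eq_param_free} already established in the proof of Lemma~\ref{main_convergence_new}, whose error term is $r^{k+1}$ of \eqref{def_rk_new}, and then simply to re-package that error into the form $\hat r^{k+1}$ of \eqref{def_rk_new2}. For part (a) the only new manipulation is to rewrite the term $\tfrac12 L_{\nabla\eta}\|z^{k+1}-z^k\|^2$ inside $r^{k+1}$. From \eqref{def_zk_new} we have $z^{k+1}-z^k=a\tau_k(s^{k+1}-z^k)$, and since $g$ is the identity so that $\nabla F=\nabla f$ and $s^{k+1}=\nabla F(x^k)-\Delta^f_k$, we get $s^{k+1}-z^k=(\nabla F(x^k)-z^k)-\Delta^f_k$. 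Expanding $\|z^{k+1}-z^k\|^2=a^2\tau_k^2\|(\nabla F(x^k)-z^k)-\Delta^f_k\|^2$ and substituting into $r^{k+1}$ splits it into a $\|d^k\|^2$-part, which merges with the $-a\beta_k\tau_k\|d^k\|^2$ of \eqref{eq_param_free} to give the coefficient of $\|d^k\|^2$ displayed in \eqref{main_rec_new2}, and a remainder that is exactly $\hat r^{k+1}$. Telescoping \eqref{eq_param_free} over $k=0,\dots,N-1$ and using $W\ge 0$ (because $\eta\le 0$ by \eqref{opt_QP}) then yields \eqref{main_rec_new2}.

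For part (b) I would take conditional expectations in $\hat r^{k+1}$. Since $d^k$ and $\nabla F(x^k)-z^k$ are $\Fc_k$-measurable while $\Eb[\Delta^f_k\mid\Fc_k]=0$ under Assumption~\ref{var_stoch_assump}, the bilinear term of \eqref{def_rk_new2} vanishes, leaving
\[
\Eb[\hat r^{k+1}\mid\Fc_k]\le \tfrac12 a^2\tau_k^2 L_{\nabla\eta}\big(\|\nabla F(x^k)-z^k\|^2+\hat\sigma_s^2\big),
\]
where I used $\Eb[\|\Delta^f_k\|^2\mid\Fc_k]\le\hat\sigma_s^2$. Summing part (a) and inserting this bound, the isolated $\tfrac12 a^2\tau_k^2 L_{\nabla\eta}\hat\sigma_s^2$ piece accumulates into the $\hat\sigma_s^2\sum_k\tau_k^2$ term on the right, so the whole problem reduces to controlling $\sum_{k}\tau_k^2\,\Eb[\|\nabla F(x^k)-z^k\|^2]$.

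To bound this filter error I would invoke the representation of $\nabla F(x^k)-z^k$ from \eqref{proof_new1} and \eqref{proof_new2} (equivalently the running-sum estimate \eqref{Fz-zk2}), which writes $\|\nabla F(x^k)-z^k\|^2$ as a $\Gamma$-weighted sum of past quantities $\|e_i\|^2\le L_{\nabla f}^2\|d^i\|^2/a^2$, $\|\Delta^f_i\|^2$, and mean-zero cross terms $\delta_i$. The $k=0$ term is handled separately: since $\tau_0=1/a$ it contributes exactly $\tfrac12 L_{\nabla\eta}\|\nabla F(x^0)-z^0\|^2$, the isolated constant on the right of \eqref{dk_expect_new}. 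For $k\ge 1$ I would multiply the representation by $\tfrac12 a^2\tau_k^2 L_{\nabla\eta}$, take expectations (so the $\delta_i$ drop out), interchange the order of the double summation, and apply the stepsize condition \eqref{const_cond1_new}, $\sum_{i=k+1}^{N}\tau_i^2\Gamma_i\le\hat c\,\tau_k\Gamma_{k+1}$, to collapse the inner sum. The $\|e_i\|^2$ contributions then reassemble into a multiple of $\sum_k\tau_k^2\,\Eb[\|d^k\|^2]$, which I move to the left-hand side; this is precisely what upgrades the coefficient $\beta_k-\tfrac12(3aL_{\nabla f}+L_{\nabla\eta})\tau_k$ of part (a) into $\beta_k-\tfrac12(3aL_{\nabla f}+L_{\nabla\eta}+a\hat c L_{\nabla\eta}L_{\nabla f}^2)\tau_k$. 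The $\|\Delta^f_i\|^2$ contributions, bounded by $\hat\sigma_s^2$ and summed using $\tau_i\le 1/a$, produce the remaining $\hat\sigma_s^2\sum_k\tau_k^2$ term, giving \eqref{dk_expect_new} after collecting constants.

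The main obstacle is exactly this last step. Under the weaker variance-only Assumption~\ref{var_stoch_assump} one loses the uniform second-moment bound $\|z^k\|^2\le\sigma_s^2$ that was available in Lemma~\ref{main_convergence_new}(b), so $\|\nabla F(x^k)-z^k\|^2$ can no longer be bounded by a constant and must instead be tracked through the filter recursion. The delicate point is that this error is itself driven by the very iterates $\|d^k\|^2$ that sit on the left, so the estimate is self-referential; condition \eqref{const_cond1_new} is designed precisely so that, after the order swap, the feedback term carries an extra factor of order $\tau_k$ and can be absorbed into the left-hand coefficient. Carrying the $\Gamma_k$ weights through the interchange of summations and checking that the absorbed term still leaves a usable (nonnegative) coefficient under the concrete choice \eqref{def_tau} is where the bookkeeping is heaviest.
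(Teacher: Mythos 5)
Your proposal is correct and follows the paper's own proof essentially step for step: part (a) by expanding $\|z^{k+1}-z^k\|^2 = a^2\tau_k^2\big\|(\nabla F(x^k)-z^k)-\Delta^f_k\big\|^2$ inside the error term $r^{k+1}$ of Lemma~\ref{main_convergence_new} and regrouping so that the $\|d^k\|^2$ piece is absorbed into the left-hand coefficient, leaving exactly $\hat r^{k+1}$. For part (b) you likewise reproduce the paper's argument: conditional expectations annihilate the bilinear terms, the filter error $\sum_k\tau_k^2\Eb\|\nabla F(x^k)-z^k\|^2$ is controlled via \eqref{proof_new1}--\eqref{proof_new2} with the summation interchange and condition \eqref{const_cond1_new} collapsing the inner sum (the $k=0$ term with $\tau_0=1/a$ yielding $\tfrac{L_{\nabla\eta}}{2}\|\nabla F(x^0)-z^0\|^2$), and the resulting $\|d^k\|^2$ feedback is moved to the left to upgrade the coefficient by $a\hat c L_{\nabla\eta}L^2_{\nabla f}$ — precisely the paper's route.
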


\begin{proof}
To show part (a), note that by \eqref{def_zk}, \eqref{g_param}, and \eqref{def_rk_new2}, we have
\[
\|z^{k+1}-z^k\|^2 = a^2 \tau^2_k \left[\|\nabla F(x^k)-z^k\|^2 + \|\Delta^f_k\|^2-2\langle \nabla F(x^k)-z^k, \Delta^f_k\rangle \right],
\]
which together with \eqref{main_rec_new} and in view of \eqref{def_rk_new2} implies \eqref{main_rec_new2}. To show part (b), note that by \eqref{proof_new1}, \eqref{proof_new2}, and similar to the proof of Theorem~\ref{cvrg_rate_nocvx}, part (a), we have
\begin{multline*}
\sum_{k=1}^{N} \tau^2_k \|\nabla F(x^k)-z^k\|^2 \\
\le \hat c \sum_{k=0}^{N-1} \tau^2_k \left(\frac{L_{\nabla f}^2\|d^k\|^2}{a} +a^2 \tau_k\|\Delta^f_k\|^2 + 2a\langle \nabla F(x^{k+1})-z^k-a\tau_i[\nabla F(x^k)-z^k], \Delta^f_k\rangle\right).
\end{multline*}
Taking conditional expectation from both sides of the above inequality and using \eqref{def_rk_new2} under Assumption~\ref{var_stoch_assump},  with the choice of $\tau_0=1/a$, we obtain
\begin{multline*}
\sum_{k=0}^{N-1} \Eb[r^{k+1} | \Fc_k] \\
\le \frac{a^2(L_{\nabla \eta}+2 \hat c)\hat \sigma^2_s}{2} \sum_{k=0}^{N-1} \tau_k^2+\frac{a \hat c L_{\nabla \eta} L^2_{\nabla f}}{2} \sum_{k=0}^{N-1} \tau^2_k \Eb[\|d^k\|^2 | \Fc_{k-1}]+\frac{L_{\nabla \eta}}{2}\|\nabla F(x^0)-z^0\|^2
\end{multline*}
(with the notation of $\Fc_{-1}\equiv \Fc_{0}$), which together with \eqref{main_rec_new2} implies \eqref{dk_expect_new}.
\end{proof}

We can now specialize the convergence rate of Algorithm~\ref{alg_ASA} by properly choosing the stepsize policies.

\begin{theorem} \label{t:main_convergence_new2}
Let $\{x^k,z^k,y^k\}_{k \ge 0}$ be the sequence generated by Algorithm~\ref{alg_ASA}, the gradient of function $f$ be Lipschitz continuous, and stepsizes set to \eqref{def_tau}. If Assumption~\ref{var_stoch_assump} holds and
\beq\label{def_beta_k_new}
\beta_k \equiv \beta \ge \frac{2(3L_{\nabla f}+ L_{\nabla \eta}+ \hat c L_{\nabla \eta} L^2_{\nabla f})}{3} \qquad k \ge 0,
\eeq
we have
\begin{multline}\label{main_rec3_new2}
\Eb \big[V(x^R,z^R)\big]  \le \frac{1}{\sqrt{N}-1} \bigg(\frac{6(\max(1,\beta^2)+L^2_{\nabla f})}{\beta}\Big[W(x^0,z^0)\\
{\quad}  +\frac{L_{\nabla \eta}}{2}\|\nabla F(x^0)-z^0\|^2+(L_{\nabla \eta}+2)\hat \sigma^2_s\Big]+2\sigma^2_s\bigg),
\end{multline}
where the distribution of $R$ is still given by \eqref{def_probl}.
\end{theorem}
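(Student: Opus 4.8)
The plan is to control $\Eb[V(x^R,z^R)]$ by first rewriting it, via the distribution \eqref{def_probl}, as the weighted average
\[
\Eb[V(x^R,z^R)] = \frac{\sum_{k=1}^{N-1}\tau_k\,\Eb[V(x^k,z^k)]}{\sum_{k=1}^{N-1}\tau_k},
\]
and then bounding the numerator. Applying \eqref{def_Vopt} with the constant coefficient $\beta_k\equiv\beta$, each term splits as $V(x^k,z^k)\le\max(1,\beta^2)\|d^k\|^2+\|\nabla F(x^k)-z^k\|^2$, so it suffices to bound $\sum_{k}\tau_k\Eb[\|d^k\|^2]$ and $\sum_{k}\tau_k\Eb[\|\nabla F(x^k)-z^k\|^2]$ separately. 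Note that the normalization $\tau_0=1/a$ required in \eqref{const_cond1_new} together with $\tau_0=1$ in \eqref{def_tau} forces $a=1$, and that $\tau_k=1/\sqrt N\le 1/\sqrt2$ for $k\ge1$, so the hypotheses of Lemma~\ref{main_convergence_new2} are in force.

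For the first sum I would invoke Lemma~\ref{main_convergence_new2}(b). On the left of \eqref{dk_expect_new} the weight of $\|d^k\|^2$ is $\beta_k-\tfrac12(3aL_{\nabla f}+L_{\nabla\eta}+a\hat c L_{\nabla\eta}L^2_{\nabla f})\tau_k$; with $a=\hat c=1$, $\tau_k\le\tau_0=1$, and the lower bound \eqref{def_beta_k_new} on $\beta$, this weight is bounded below by a fixed positive multiple of $\beta$. Dividing \eqref{dk_expect_new} by that constant yields $\sum_{k=0}^{N-1}\tau_k\Eb[\|d^k\|^2]\lesssim\beta^{-1}\big[W(x^0,z^0)+\tfrac{L_{\nabla\eta}}2\|\nabla F(x^0)-z^0\|^2+(L_{\nabla\eta}+2)\hat\sigma^2_s\sum_k\tau_k^2\big]$, which is precisely the bracketed quantity appearing in \eqref{main_rec3_new2}.

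The gradient-filter error is the main work, and I would mirror the argument of Theorem~\ref{cvrg_rate_nocvx}(a) specialized via \eqref{g_param}. Starting from the telescoped representation \eqref{proof_new1} of $\nabla F(x^k)-z^k$, with $e_i=(\nabla f(x^{i+1})-\nabla f(x^i))/(a\tau_i)$ and per-step noise $\Delta^f_i$, one reaches a recursion of the form \eqref{Fz-zk2}; multiplying by $\tau_k$, summing, and swapping the order of summation turns the $\|e_i\|^2$ and $\|\Delta^f_i\|^2$ contributions into $\sum_i(\tau_i/\Gamma_{i+1})\big(\sum_{k>i}\tau_k\Gamma_k\big)(\cdots)$, which the summability condition \eqref{const_cond1} (valid with $\bar c=1$ for the stepsizes \eqref{def_tau}) collapses to a constant times $\sum_k\tau_k(\cdots)$. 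Using \eqref{proof_new2}, namely $\|e_i\|^2\le L^2_{\nabla f}\|d^i\|^2/a^2$, the \emph{variance} bound $\Eb[\|\Delta^f_i\|^2\mid\Fc_i]\le\hat\sigma^2_s$ from Assumption~\ref{var_stoch_assump}, and the fact that the residual inner-product terms are conditionally mean-zero, one obtains $\sum_{k}\tau_k\Eb[\|\nabla F(x^k)-z^k\|^2]\lesssim L^2_{\nabla f}\sum_k\tau_k\Eb[\|d^k\|^2]+\hat\sigma^2_s\sum_k\tau_k^2$. The delicate point, and the main obstacle, is that the noise here must be absorbed through its variance rather than its second moment; this is exactly why the refined remainder $\hat r^{k+1}$ in \eqref{def_rk_new2} and the two distinct summability conditions \eqref{const_cond1_new} (for \eqref{dk_expect_new}) and \eqref{const_cond1} (for this step) are both needed.

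Finally I would assemble the pieces: adding $\max(1,\beta^2)\sum_k\tau_k\Eb[\|d^k\|^2]$ to the gradient-error bound produces the factor $\max(1,\beta^2)+L^2_{\nabla f}$ in front of $\sum_k\tau_k\Eb[\|d^k\|^2]$, into which I substitute the estimate from the second paragraph. For the stepsizes \eqref{def_tau} one checks $\Gamma_k=(1-1/\sqrt N)^{k-1}$ and verifies that both \eqref{const_cond1} and \eqref{const_cond1_new} hold with $\bar c=\hat c=1$, together with $\sum_{k=1}^{N-1}\tau_k\ge\sqrt N-1$ and $\sum_{k=0}^{N-1}\tau_k^2\le2$. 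Dividing the numerator bound by $\sum_{k=1}^{N-1}\tau_k\ge\sqrt N-1$ and collecting the absolute constants yields \eqref{main_rec3_new2}. Beyond the variance-controlled gradient-filter estimate, everything else is routine bookkeeping of constants and verification of the stepsize conditions.
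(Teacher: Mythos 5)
Your proposal is correct and follows essentially the same route as the paper's own proof: it verifies \eqref{const_cond1_new} (and \eqref{const_cond1}) with $\hat c=\bar c=1$ for the stepsizes \eqref{def_tau}, extracts $\sum_k\tau_k\Eb[\|d^k\|^2]$ from \eqref{dk_expect_new} using the lower bound \eqref{def_beta_k_new} on the coefficient of $\|d^k\|^2$, bounds the filter error by mirroring Theorem~\ref{cvrg_rate_nocvx}(a) via \eqref{proof_new1}--\eqref{proof_new2} with the variance bound of Assumption~\ref{var_stoch_assump}, and combines through \eqref{def_Vopt} and \eqref{def_probl} -- exactly the skeleton the paper compresses into three lines, with your write-up additionally making explicit the deduction $a=1$ from $\tau_0=1$ and $\tau_0=1/a$. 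Incidentally, your derivation yields $2\hat\sigma^2_s$ as the trailing term, which indicates that the $2\sigma^2_s$ appearing in \eqref{main_rec3_new2} is a typo for $2\hat\sigma^2_s$.
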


\begin{proof}
First, note that by the choice of stepsizes in \eqref{def_tau}, condition \eqref{const_cond1_new} is satisfied with $\hat c=1$. Moreover, by \eqref{dk_expect_new} and \eqref{def_beta_k_new}, we have
\begin{align*}
&\sum_{k=0}^{N-1} \tau_k \Eb[\|d^k\|^2 | \Fc_{k-1}]  \le \frac{6}{\beta} \left[W(x^0,z^0)+\frac{L_{\nabla \eta}}{2}\|\nabla F(x^0)-z^0\|^2+(L_{\nabla \eta}+2)\hat \sigma^2_s\right], \nn \\
&\sum_{k=1}^{N} \tau_k \Eb[\|\nabla F(x^k)-z^k\|^2| \Fc_{k-1}] \le L_{\nabla f}^2 \sum_{k=0}^{N-1} \tau_k \Eb[\|d^k\|^2 | \Fc_k]+2\hat \sigma^2_s.
\end{align*}
Combining the above relations with \eqref{def_Vopt}, we obtain \eqref{main_rec3_new2}.
\end{proof}

\vgap

While the rate of convergence of Algorithm~\ref{alg_ASA} in \eqref{main_rec3_new2} is of the same order as  in \eqref{main_rec3_new}, the former is obtained
under a relaxed assumption on the outputs of the stochastic oracle, as stated in Assumption~\ref{var_stoch_assump}. However, in this case, the regularization coefficient $\beta_k$ depends on the problem parameters (like in other algorithms for smooth stochastic optimization).

\vgap
 We also have the following asymptotic convergence result.

\begin{theorem}
\label{t:convergence-sl}
Assume that the sequence of stepsizes satisfy \eqref{general_cond}.
Then a constant $\bar{a}>0$ exists  such that, for all $a\in (0,\bar{a})$,  with probability 1, every accumulation point $(x^*,z^*)$ of the sequence $\{x^k,z^k)$ generated by Algorithm \ref{alg_ASA} satisfies the conditions:
\begin{gather*}
z^* = \nabla f(x^*),\\
-z^* \in {\cal N}_X(x^*).
\end{gather*}
\end{theorem}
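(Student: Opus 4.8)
The plan is to follow the proof of Theorem~\ref{t:convergence}, specialized to the single-level setting in which $g$ is the identity and the merit function reduces to \eqref{def_merit_new}. The starting point is the one-step estimate \eqref{eq_param_free}, namely $W(x^{k+1},z^{k+1})-W(x^k,z^k)\le -a\beta\tau_k\|d^k\|^2+r^{k+1}$ with $r^{k+1}$ given by \eqref{def_rk_new}, together with the fact that, since $x\in X$ is feasible for \eqref{QP_g} with objective value zero, $\eta(x,z)\le 0$ and hence $W(x,z)\ge 0$. The goal is to extract from this recursion that $\sum_k\tau_k\|d^k\|^2<\infty$ almost surely, and then to identify the accumulation points of $\{(x^k,z^k)\}$ using $d^k\to 0$ and the tracking property of the averaged gradient $z^k$.

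First I would take conditional expectations in \eqref{eq_param_free}. Since $d^k$ is $\Fc_k$-measurable and, under Assumption~\ref{stoch_assump} with \eqref{g_param}, $\Eb[\Delta^f_k\mid\Fc_k]=0$, the inner-product term of $r^{k+1}$ has zero conditional mean, leaving $\Eb[W(x^{k+1},z^{k+1})\mid\Fc_k]\le W(x^k,z^k)-a\beta\tau_k\|d^k\|^2+\Eb[r^{k+1}\mid\Fc_k]$. By the bound \eqref{rk_bnd_new} of Lemma~\ref{main_convergence_new}(b) together with $\Eb\sum_k\tau_k^2<\infty$ from \eqref{general_cond}, the nonnegative quantities $\Eb[r^{k+1}\mid\Fc_k]$ are almost surely summable. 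A Robbins--Siegmund supermartingale convergence argument then shows that $W(x^k,z^k)$ converges and $\sum_k\tau_k\|d^k\|^2<\infty$ almost surely; combined with $\sum_k\tau_k=+\infty$ this forces $\liminf_{k\to\infty}\|d^k\|=0$ a.s. To upgrade $\liminf\|d^k\|=0$ to $d^k\to 0$, I would invoke \cite[Lem.~8]{ruszczynski87}: as $\tau_k\to 0$ the increments $x^{k+1}-x^k=\tau_k d^k$ and $z^{k+1}-z^k$ vanish, while $d^k=\Pi_X(x^k-\tfrac1\beta z^k)-x^k$ is Lipschitz in $(x^k,z^k)$ by nonexpansiveness of $\Pi_X$, so $\|d^k\|$ cannot repeatedly leave and re-enter a neighbourhood of $0$ without contradicting $\sum_k\tau_k\|d^k\|^2<\infty$ with $\sum_k\tau_k=+\infty$.

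For the identification of accumulation points, consider a convergent subsequence $(x^{k_j},z^{k_j})\to(x^*,z^*)$. Passing to the limit in $d^{k_j}\to 0$ and using continuity of the projection gives $\bar{y}(x^*,z^*,\beta)=x^*$, which by the characterization recorded just before \eqref{Vx} is exactly $-z^*\in\Nc_X(x^*)$. For the remaining condition $z^*=\nabla f(x^*)$, I would use the representation \eqref{proof_new1}--\eqref{proof_new2}: the error $\nabla f(x^k)-z^k$ is a convex combination of the terms $e_i+\Delta^f_i$, where $\|e_i\|\le (L_{\nabla f}/a)\|d^i\|\to 0$ and the weighted average of the martingale differences $\Delta^f_i$ vanishes as $\tau_k\to 0$, so that $\nabla f(x^k)-z^k\to 0$; by continuity of $\nabla f$ this yields $z^*=\nabla f(x^*)$.

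The main obstacle is making this last identification rigorous \emph{jointly} with $d^k\to 0$, and it is here that the threshold $\bar a$ enters. In the style of \cite{ruszczynski87}, establishing $d^k\to 0$ and $\nabla f(x^k)-z^k\to 0$ simultaneously requires a Lyapunov function that augments $W$ with a control on the tracking error $\|\nabla f(x^k)-z^k\|^2$; closing the estimate for its decrease requires the cross terms between the primal motion $x^{k+1}-x^k=\tau_k d^k$ and the filter motion $z^{k+1}-z^k$ to be dominated, which holds only when $a$ is below a threshold $\bar a$ determined by $L_{\nabla f}$ and $\beta$. Carrying out this combined supermartingale estimate along the coupled recursion, and verifying that the residual noise and second-order terms remain summable, is the technically delicate part; once it is in place, the rest is a direct specialization of the bounds already established in Lemma~\ref{main_convergence_new} and Theorem~\ref{t:main_convergence_new}.
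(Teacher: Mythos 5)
Your proposal is correct and follows essentially the same route as the paper, whose own proof of Theorem~\ref{t:convergence-sl} is simply a citation to \cite{ruszczynski87}: the merit-function supermartingale argument giving $\sum_k \tau_k\|d^k\|^2<\infty$ a.s., the upgrade from $\liminf\|d^k\|=0$ to $d^k\to 0$ via \cite[Lem.~8]{ruszczynski87}, and the identification of accumulation points through the averaged representation \eqref{proof_new1}--\eqref{proof_new2} are exactly the steps the paper invokes (they mirror its sketch of Theorem~\ref{t:convergence} specialized to $g(x)\equiv x$). Your observation that the threshold $\bar a$ arises in the coupled tracking-error estimate is likewise consistent with the analysis of \cite{ruszczynski87}, and your writeup is in fact more detailed than the paper's one-line proof.
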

\begin{proof}
 The  analysis follows from \cite{ruszczynski87}.
\end{proof}

\vgap

\section{Concluding Remarks}

We have presented a single time-scale stochastic approximation method for smooth nested optimization problems. We showed that the sample complexity bound of this method for finding an approximate stationary point of the problem is in the same order as that of the best-known bound for the stochastic gradient method for single-level stochastic optimization problems. Furthermore, our convergence analysis is the same for both unconstrained and constrained cases and does not require  batches of samples per iteration. We also presented a simplified parameter-free variant of the NASA method for single level problems, which enjoys the same complexity bound, regardless of the existence of constraints.

%\bibliographystyle{siamplain}
%\bibliography{nasa,SCGD}

\end{document}